	\newtheorem{theo}{Theorem}[section]
	\newtheorem{lem}[theo]{Lemma}
	\theoremstyle{definition}
	\newtheorem{defn}[theo]{Definition}
	\newtheorem*{remark}{Remark}
	\newtheoremstyle{proof}
		{3pt}
		{3pt}
		{}
		{}
		{\itshape}
		{:}
		{.5em}
		{}
\newcommand{\Z}{\ensuremath{\mathbb{Z}} }
\newcommand{\R}{\ensuremath{\mathbb{R}} }
\newcommand{\E}{\ensuremath{\mathbb{E}} }
\newcommand{\bfl}{\ensuremath{{\bf l}}}
\newcommand{\bfi}{\ensuremath{{\bf i}}}
\newcommand{\bfj}{\ensuremath{{\bf j}}}
\newcommand{\tr}{\text{tr}}
\renewcommand{\phi}{\varphi}
\newcommand{\wt}{\widetilde}
\newcommand{\wX}{\ensuremath{\raisebox{0pt}[1pt][1pt]{$\wt{[X]}$}_n^N}}
\newcommand{\wXpow}{\ensuremath{\raisebox{0pt}[1pt][1pt]{\scriptsize{$\wt{[X]}$}}_n^N}}
\newcommand{\XN}{\ensuremath{{[X]}_n^N}\ }
\newcommand{\PP}{\mathbb{P}}
\newcommand{\stab}{\stackrel{d_{st}}{\longrightarrow}}
\newcommand{\toop}{\stackrel{\PP}{\longrightarrow}}
\newcommand{\bee}{\begin{equation}}
\newcommand{\eee}{\end{equation}}
\newcommand{\bea}{\begin{eqnarray}}
\newcommand{\eea}{\end{eqnarray}}
\newcommand{\bean}{\begin{eqnarray*}}
\newcommand{\eean}{\end{eqnarray*}}
\begin{document}
\title{On spectral distribution of high dimensional covariation matrices}
\author{ Claudio Heinrich \thanks{Department of Mathematics, University of Aarhus,
Ny Munkegade 118, 8000 Aarhus C,
Denmark, Email: claudio.heinrich@math.au.dk.} \and
Mark Podolskij \thanks{Department of Mathematics, University of Aarhus,
Ny Munkegade 118, 8000 Aarhus C,
Denmark, Email: mpodolskij@math.au.dk.}}

\date{\today}

\maketitle

\begin{abstract}
In this paper we present the asymptotic theory for spectral distributions of high dimensional 
covariation matrices of Brownian diffusions. More specifically, we consider $N$-dimensional 
It\^o integrals with time varying matrix-valued integrands. We observe $n$
equidistant high frequency data points of the underlying Brownian diffusion and we assume that $N/n\rightarrow c\in (0,\infty)$.
We show that under a certain mixed spectral moment condition the spectral distribution of the empirical covariation matrix
converges in distribution almost surely. Our proof relies on method of moments and applications of graph theory.

\ \

{\it Keywords}:  diffusion processes, graphs, high frequency data, random matrices.\bigskip

{\it AMS 2010 Subject Classification:} 62M07, 60F05, 62E20, 60F17.

\end{abstract}

\section{Introduction}
\label{sec1}
\setcounter{equation}{0}
\renewcommand{\theequation}{\thesection.\arabic{equation}}

Last decades have witnessed an immense progress in the theory of random matrices and their applications
to probability, statistical physics and number theory. Since the seminal work \cite{W}, and increasingly so since \cite{MP}, the 
asymptotic behaviour of the spectrum of random matrices received a great deal of attention. We refer
to the monographs \cite{AGZ,BS,PS} for a detailed exposition of recent results and techniques.

This paper is devoted to the study of spectral distribution of empirical covariation matrices of Brownian integrals. On a filtered
probability space $(\Omega, \mathcal F, (\mathcal F_t)_{t\in [0,1]}, \mathbb P)$ we consider a diffusion process $(X_t)_{t\in [0,1]}$
that is defined as

\begin{align} \label{xdef}
X_t= X_0+\int_0^t f_s dW_s,
\end{align} 
where $W$ denotes an $N$-dimensional Brownian motion and $f$ is a $\R^{N \times N}$-valued step function given as
\begin{align} \label{fdef}
f_t = \sum_{l=1}^m T_l 1_{[t_{l-1}, t_l)} (t) 
\end{align} 
where $0=t_0< \cdots < t_m=1$ is a fixed partition of the interval $[0,1]$ and the matrices $T_j$, $1\leq j\leq m$, are either deterministic
or independent of the driving Brownian motion $W$. In mathematical finance one of the most central objects
is the empirical covariation of $X$, which is defined via
\begin{align} \label{vardef}
[X]^N_n := \sum_{i=1}^n \left(X_{\frac in} - X_{\frac{i-1}{n}} \right) \left(X_{\frac in} - X_{\frac{i-1}{n}} \right)^{*}.
\end{align}  
Here and throughout the paper $A^{^*}$ denotes the transpose of a matrix $A$. For a fixed dimension $N$ it is well known
that $[X]^N_n$ converges to the covariation matrix $[X]^N= \int_0^1 f_s f_s^{^*} ds$ as $n\rightarrow \infty$ whenever
the It\^o integral at \eqref{xdef} is well defined. When $N$ converges to infinity at the same rate as $n$ the situation becomes
much more delicate. In the following we briefly review some recent work on spectral distribution of large covariance/covariation matrices.
Recall that for a given matrix $A\in \R^{N \times N}$ with real eigenvalues $\lambda_1, \ldots, \lambda_N$ the spectral distribution 
of $A$ is defined via
\[
F^A (x):= \frac 1 N \sum_{j=1}^N 1_{\{\lambda_j\leq x\}}.
\]    
In \cite{EK} the author studies the spectral distribution of the empirical high dimensional covariance matrix based on i.i.d. data,
which corresponds to our model \eqref{xdef} with $f$ being constant. In this framework the spectral distribution of the empirical covariance
matrix converges and, more importantly, there is a one-to-one connection between the limit of the Stieltjes transform  of $F^{[X]^N_n}$
and the limit of $F^{[X]^N}$ (given the latter exists). It is exactly this relationship, called Mar\v{c}enko-Pastur equation, which makes
the estimation of the spectral distribution of the covariation matrix $[X]^N$ possible. In another paper \cite{ZL} the authors 
consider the model \eqref{xdef}, where the time variation of $f$ comes solely from a scalar function. In other words, they study 
processes of the type $f_s = a_s T$, where $a:\R \rightarrow \R$ is a scalar function and $T\in \R^{N \times N}$. In this situation the 
methods of \cite{EK} can not be directly applied to infer $F^{[X]^N}$, but a certain modification of the functional $[X]^N_n$, 
which separates the scalar function $a$ and the matrix $T$, still leads to a feasible procedure. 

Unfortunately, both methods do not work when the function $f$ has the form \eqref{fdef}. More precisely, the Stieltjes transform
method is hardly applicable in our setting unless all matrices $T_1, \cdots, T_m$ have the same eigenspaces for all $N$. In this work we follow
the route of method of moments, which has been originally proposed by \cite{YK} in the context of random matrices. The basic idea is to show
the almost sure convergence of all moments of the random probability measure $F^{[X]^N_n}$. Then, under Carleman's condition, the limiting 
distribution is uniquely determined by the limits of moments. The idea of the proof is heavily based on combinatorics of colored graphs.   
The main result of the paper is the following theorem.

\begin{theo} \label{th1}
Assume that $N/n\rightarrow c\in (0,\infty)$ and the following conditions hold: \newline
(i) There exists a constant $\tau_0>0$ such that $\|T_l\|_{\text{op}}\leq \tau_0$ for all $1\leq l\leq m$ and uniformly in $N$. \newline
(ii) For any $k\geq 1$ and any multi-index ${\bf l} \in \{1, \ldots, m\}^k$ the mixed spectral moment condition holds:
\begin{align} \label{msm}
M_{\bf{l}}^k := \lim_{N\rightarrow \infty} \frac 1N \text{\rm tr} \left( \prod_{i=1}^k T_{l_i}T^*_{l_i} \right) \quad \text{exists in the almost sure sense and is non-random}. 
\end{align} 
\begin{percom}
almost surely: The quantity $\frac 1N \text{\rm tr} \left( \prod_{i=1}^k T_{l_i}T^*_{l_i} \right)$ tends almost surely to a nonranom $M_{\bf{l}}^k$.
\end{percom}
Then $F^{[X]^N_n}$ converges in distribution to a non-random probability measure $F$ almost surely. The $k$-th moment $m_k$ of $F$ is given
via
\begin{align} \label{moments}
m_k=\sum_{r=1}^{k}c^{r-1}\sum_{\nu_1+...+\nu_r=k}\ \sum_{\bfl'\in\{1,...,m\}^{k}}\ 
 c_{r,\nu,\bfl'}\prod_{a=1}^r M^{\nu_a}_{\bfl^{(a)}}\prod_{l=1}^m (t_l-t_{l-1})^{s_{l,\nu,\bfl'}},
\end{align}
where $\bfl^{(a)}=(l^{(a)}_1,...,l^{(a)}_{\nu_a})\in\{1,...,m\}^{\nu_a}$ are such that $\bfl'=(\bfl^{(1)},...,\bfl^{(r)}).$ The power $s_{l,\nu,\bfl'}$ is defined as $s_{l,\nu,\bfl'}=\sum_{a=1}^r n_l^{(a)}$ where 
\[n_l^{(a)}=\begin{cases}\#\{j\: : \:l^{(1)}_j=l\} & \text{if $a=1$,}\\
\#\{j\neq 1\: : \: l^{(a)}_j=l\}& \text{else.}
\end{cases}\]
The definition of $c_{r,\nu,\bfl'}$ is given in section \ref{cdef}.
\end{theo}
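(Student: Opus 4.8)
The plan is to follow the method of moments: I will show that for each fixed $k \geq 1$ the $k$-th moment of the random spectral measure $F^{[X]^N_n}$, namely $\frac1N \tr\big(([X]^N_n)^k\big)$, converges almost surely to the deterministic quantity $m_k$ given by \eqref{moments}, and then invoke Carleman's condition to conclude that $F^{[X]^N_n}$ converges in distribution (almost surely) to the unique measure $F$ with these moments. The first reduction is to rescale the Brownian increments: writing $\Delta_i W := W_{i/n} - W_{(i-1)/n}$, each increment $X_{i/n} - X_{(i-1)/n}$ equals $T_{l(i)} \Delta_i W$ up to a negligible boundary term, where $l(i)$ is the index with $(i-1)/n, i/n \in [t_{l(i)-1}, t_{l(i)})$ (only $O(m)$ indices $i$ straddle a partition point, and these contribute $O(m/N) \to 0$ to every moment, so they can be discarded). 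Hence $[X]^N_n = \sum_{i=1}^n T_{l(i)} (\Delta_i W)(\Delta_i W)^* T_{l(i)}^*$, and the number of increments in block $l$ is $n_l := \lfloor n t_l \rfloor - \lfloor n t_{l-1}\rfloor \sim n(t_l - t_{l-1})$.

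The core computation is to expand
\[
\frac1N \tr\big(([X]^N_n)^k\big) = \frac1N \sum_{i_1, \ldots, i_k} \tr\Big( T_{l(i_1)} (\Delta_{i_1}W)(\Delta_{i_1}W)^* T_{l(i_1)}^* \cdots T_{l(i_k)} (\Delta_{i_k}W)(\Delta_{i_k}W)^* T_{l(i_k)}^* \Big),
\]
and to take expectations over $W$ (conditionally on the $T_l$'s, which are independent of $W$). Since the $\Delta_i W$ are independent centered Gaussian vectors with covariance $\frac1n I_N$, Wick's theorem expresses each term as a sum over pairings of the $2k$ Gaussian factors; because the two factors $(\Delta_{i_a}W)$ and $(\Delta_{i_a}W)^*$ attached to a single index must involve the same time index to be non-zero, the surviving pairings are naturally encoded by combinatorial data: a partition of $\{1,\ldots,k\}$ into blocks of equal time-index, together with the way the $N$-dimensional matrix indices are contracted around the trace. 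This is exactly where the colored-graph combinatorics enters — each admissible diagram contributes a product of traces of the form $\frac1N \tr\big(\prod_a T_{l_a} T_{l_a}^*\big)$ over the cycles of the diagram, times a power of $\frac1N$ counting independent summation loops, times a count of how many index tuples $(i_1,\ldots,i_k)$ realize a given coloring (which produces the factors $n_l^{n_l/\text{etc.}}$, i.e. $\prod_l (t_l - t_{l-1})^{s_{l,\nu,\bfl'}}$ after normalization). Collecting the contributions that survive in the limit $N/n \to c$: a diagram with $r$ cycles carries $r-1$ surplus factors of $c$ (hence $c^{r-1}$), the $r$ cycles give $\prod_{a=1}^r M^{\nu_a}_{\bfl^{(a)}}$ by assumption \eqref{msm}, the block-size count gives the partition $\nu_1 + \cdots + \nu_r = k$, and the remaining purely combinatorial multiplicity is the constant $c_{r,\nu,\bfl'}$ defined in section \ref{cdef}. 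Diagrams that are not "non-crossing / tree-like" in the appropriate sense contribute a strictly negative power of $N$ and vanish; this planarity/genus bookkeeping is the standard but delicate part.

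Finally I must upgrade convergence in expectation to almost sure convergence. For this I estimate the variance (or a higher moment) of $\frac1N \tr\big(([X]^N_n)^k\big)$: subtracting the mean and squaring produces a double graph sum in which the two copies must be "connected" (otherwise the contribution cancels against the product of means), and a connectedness constraint costs an extra factor of $N^{-2}$, so $\Var\big(\frac1N \tr(([X]^N_n)^k)\big) = O(N^{-2})$; summability along $N$ then gives almost sure convergence by Borel–Cantelli, after which a standard subsequence/monotonicity argument passes from integer $N$ to the genuine sequence $N = N(n)$. One also uses assumption (i), $\|T_l\|_{\mathrm{op}} \leq \tau_0$, to bound all traces crudely by $\tau_0^{2k}$ and thereby verify Carleman's condition $\sum_k m_k^{-1/2k} = \infty$ (the moments grow at most like $C^k (k)!$-type bounds, which is more than enough). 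The main obstacle is the second step: correctly setting up the colored-graph formalism so that the bookkeeping of (a) time-index partitions, (b) matrix-index cycles, and (c) the powers of $N$ versus $n$ all line up to produce precisely the closed form \eqref{moments}, and in particular pinning down which diagrams survive in the limit and proving the others are negligible.
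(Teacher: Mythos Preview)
Your proposal is correct and follows essentially the same route as the paper: moment method via colored-graph combinatorics, isolating the tree-like (category $\mathcal C_1$) diagrams as the only survivors in the limit, a fourth-moment bound of order $O(N^{-2})$ for almost sure convergence via Borel--Cantelli, and Carleman's condition from the operator norm bound. The paper's implementation is the detailed $Q^+$-graph formalism with \bfi- and $\bfj^{(l)}$-lines (rather than the Wick-pairing language you use), and it uses the fourth moment rather than the variance for the concentration step, but these are presentational choices rather than substantive differences.
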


The paper is structured as follows. In section \ref{sec2} we present an overview about related problems and
give some remarks on the conditions of Theorem \ref{th1}. At the end of this section we also give the definition of the constant $c_{r,\nu,\bfl'}$. Section \ref{sec3} is devoted to the proof of Theorem \ref{th1}.

\section{Related problems and remarks}
\label{sec2}
\setcounter{equation}{0}
\renewcommand{\theequation}{\thesection.\arabic{equation}}
In this section we review some related studies and comment on conditions of Theorem \ref{th1}.

\subsection{Limit theory for a fixed dimension $N$} 
As we mentioned in the introduction, the definition of a covariation matrix implies the convergence in probability
\[
[X]^N_n \toop [X]^N \qquad \text{as } n\rightarrow \infty 
\] 
when the dimension $N$ is fixed. Furthermore, the asymptotic results of \cite[Theorem 2.5]{BGJPS06} imply the following theorem.

\begin{theo} \label{th2}
Assume that the process $f$ is  c\`adl\`ag (not necessarily of the form \eqref{fdef}). Then we obtain the stable convergence
\begin{align} \label{cltfixedN}
\sqrt{n} \left( [X]^N_n - [X]^N \right) \stab \int_0^1 A_s^{1/2} dW'_s,
\end{align}
where $W'$ is a $N^2$-dimensional Brownian motion independent of the $\sigma$-algebra $\mathcal F$ and the $N^2 \times N^2$-dimensional matrix $A_s$
is given as
\[
A_s^{jk,j'k'}= C_s^{jj'} C_s^{kk'} + C_s^{jk'} C_s^{kj'} \qquad \text{with} \qquad C_s = f_sf_s^{*}.
\]  
\end{theo}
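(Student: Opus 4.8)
The plan is to recognize Theorem~\ref{th2} as a standard high-frequency CLT for realized covariation, and to derive it from \cite[Theorem 2.5]{BGJPS06} by specializing their general semimartingale framework to the continuous It\^o integral \eqref{xdef}. First I would set $C_s = f_s f_s^*$ and note that $[X]^N = \int_0^1 C_s\,ds$ is the quadratic covariation of the $\R^N$-valued continuous martingale $X$, so that each scalar entry $([X]^N_n)^{jk} = \sum_{i=1}^n \Delta_i^n X^j\,\Delta_i^n X^k$ is exactly the realized covariation of the one-dimensional components $X^j, X^k$, with $\Delta_i^n X := X_{i/n} - X_{(i-1)/n}$. Stacking the $N^2$ entries into a single vector-valued functional, the convergence \eqref{cltfixedN} is the multivariate version of the realized covariation CLT, and the cited theorem applies since $f$ is assumed c\`adl\`ag and $X$ has no jumps.

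The key steps are then: (1) verify the structural hypotheses of \cite[Theorem 2.5]{BGJPS06} in our setting --- continuity of $X$, c\`adl\`ag (hence locally bounded) integrand $f$, and the fact that the test function $h(x) = xx^*$ (entrywise, a degree-two polynomial) satisfies the required smoothness and growth conditions; (2) read off the limiting process as a stochastic integral against a Brownian motion $W'$ defined on an extension of the original space and independent of $\mathcal F$; and (3) identify the conditional covariance structure of the limit. For step (3), the general theory gives that the limit is a continuous process with (conditionally on $\mathcal F$) Gaussian increments whose covariance at time $s$ is determined by the "$\gamma$" matrix built from the predictable quadratic covariation of the jumps-free martingale $X$; concretely, for the product functional $h^{jk}(x)=x^jx^k$ one computes
\begin{align*}
\mathrm{Cov}\!\left( \Delta_i^n X^j \Delta_i^n X^k,\ \Delta_i^n X^{j'} \Delta_i^n X^{k'} \mid \mathcal F_{(i-1)/n}\right) \approx \tfrac1{n^2}\left( C^{jj'}_s C^{kk'}_s + C^{jk'}_s C^{kj'}_s\right),
\end{align*}
by the Gaussian (Isserlis/Wick) formula applied to the approximately Gaussian increments of the It\^o integral over a short interval where $f$ is essentially constant. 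Summing over $i$ and passing to the limit yields the matrix $A_s$ as stated, and $\int_0^1 A_s^{1/2}\,dW'_s$ is the Gaussian process with this conditional covariance.

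The main obstacle --- really the only non-bookkeeping point --- is matching our notation and normalization to that of \cite{BGJPS06}: their result is stated for a generic c\`adl\`ag semimartingale and a generic family of test functions, with the limit expressed through an abstract conditional covariance kernel, so the work is to check that our quadratic test function lies in the admissible class and that the abstract kernel, evaluated on $h^{jk}(x)=x^jx^k$, reduces to the explicit $A_s^{jk,j'k'} = C_s^{jj'}C_s^{kk'} + C_s^{jk'}C_s^{kj'}$. Since $f$ has the step form \eqref{fdef} this verification is in fact easier than in the general c\`adl\`ag case --- $f$ is piecewise constant, so on each subinterval $[t_{l-1},t_l)$ the increments $\Delta_i^n X$ are genuinely i.i.d.\ centered Gaussian with covariance $\tfrac1n T_l T_l^*$ (conditionally on $\mathcal F$, using the independence assumed for the $T_l$), and the Wick formula is exact rather than approximate; only the finitely many intervals straddling a partition point $t_l$ need a negligible-remainder estimate. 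Stable convergence (rather than mere convergence in law) is inherited directly from the cited theorem, as is the independence of $W'$ from $\mathcal F$.
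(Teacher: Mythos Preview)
Your proposal is correct and matches the paper exactly: the paper does not give a separate proof of Theorem~\ref{th2} but simply states it as a direct consequence of \cite[Theorem 2.5]{BGJPS06}, which is precisely the derivation you outline. Your additional verification of the hypotheses and the Wick-formula computation of the covariance kernel $A_s$ fills in details the paper leaves implicit.
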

Quite surprisingly, Theorem \ref{th2} holds for general c\`adl\`ag stochastic processes $f$. We remark that Theorem \ref{th2}
can be transformed into a feasible standard central limit theorem (cf. \cite[Example 3.5]{PV}), thus making statistical inference
for components of $[X]^N$ possible. Such general results do not hold anymore when $N\rightarrow \infty $ and one requires much stronger conditions 
on the process $f$.

\subsection{Limit theory in the setting $N/n\rightarrow c\in (0,\infty)$}
In this subsection we shortly review the results of \cite{EK,ZL}. In \cite{EK} the author considers empirical covariance matrices of
i.i.d. vectors. In the setting of our model \eqref{xdef} it means that the function $f$ is deterministic and constant over the interval 
$[0,1]$. In order to state the main result we introduce the Stieltjes transform, which is defined via
\begin{align} \label{stielttransform}
m_{\mu} (z) = \int_{\R} \frac{1}{x-z} \mu(dx), \qquad z\in \mathbb{C}^+,
\end{align} 
where $\mu$ is a measure on $\R$ and $\mathbb{C}^+:= \{z\in \mathbb{C}:~\text{Im}~z>0\}$. Since the matrix $f$ is constant, we can write 
(in distribution)
\[
X_t= X_0+\Sigma^{1/2} W_t \qquad \text{with} \qquad \Sigma= [X]^N.  
\] 
The following path breaking result, called Mar\v{c}enko-Pastur equation, has been shown in \cite{MP} for the case of a diagonal 
matrix $\Sigma$ and extended later to general covariance matrices $\Sigma$ in \cite{S}.  

\begin{theo} \label{th3}
Assume that the spectral distribution $F^{\Sigma}$ of $\Sigma$ converges in distribution to $F$ as $N\rightarrow \infty$. When 
$N/n\rightarrow c\in (0,\infty)$ the following results hold: \newline
(i) Define the function $v_{[X]^N_n}(z):= -z^{-1}(1- N/n) + N m_{F^{[X]^N_n}} (z)/n$ for $z\in \mathbb C^{+}$. 
Then there exists a deterministic function $v$ such that
\[
v_{[X]^N_n}(z) \rightarrow v(z) \quad \text{almost surely.}
\]
(ii) The function $v$ from (i) satisfies the Mar\v{c}enko-Pastur equation
\begin{align} \label{mpequation}
-\frac{1}{v(z)} = z-c \int_{0}^{\infty} \frac{x}{1+xv(z)} F(dz).
\end{align}
(iii) The equation \eqref{mpequation} has a unique solution, which is the Stieltjes transform of a measure.
\end{theo}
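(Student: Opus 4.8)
The plan is to reduce Theorem~\ref{th3} to the classical large sample-covariance setting and then run the Stieltjes-transform argument of \cite{MP,S}. Since $f\equiv\Sigma^{1/2}$ is constant, the rescaled increments $Y_i:=\sqrt n\,(X_{i/n}-X_{(i-1)/n})=\Sigma^{1/2}\xi_i$ are i.i.d.\ with $\xi_i$ standard Gaussian in $\mathbb{R}^N$ (conditionally on $\Sigma$, should it be random), hence $[X]^N_n=\frac1n\Sigma^{1/2}\Xi\Xi^*\Sigma^{1/2}=:B_n$ with $\Xi=(\xi_1,\dots,\xi_n)$. I would work with the companion matrix $\underline B_n:=\frac1n\Xi^*\Sigma\Xi$, which shares the nonzero spectrum of $B_n$; accounting for the $|N-n|$ zero eigenvalues shows that the object in part~(i) is precisely the Stieltjes transform of the empirical spectral distribution of $\underline B_n$, that is $v_{[X]^N_n}(z)=\underline m_n(z):=\frac1n\tr(\underline B_n-zI)^{-1}$. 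All of the following is for a fixed $z\in\mathbb C^+$.

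\emph{Step 1 (concentration).} I would first establish $\underline m_n(z)-\mathbb E\,\underline m_n(z)\to 0$ almost surely. Writing $\mathbb E_k$ for conditional expectation given the first $k$ columns of $\Xi$, the difference $\underline m_n(z)-\mathbb E\,\underline m_n(z)=\sum_{k=1}^n(\mathbb E_k-\mathbb E_{k-1})\underline m_n(z)$ is a sum of martingale differences, each bounded by $C(z)/n$ via the rank-one resolvent inequality $|\tr M^{-1}-\tr(M+qq^*)^{-1}|\le|\operatorname{Im} z|^{-1}$; Burkholder's inequality gives $\mathbb E|\underline m_n(z)-\mathbb E\,\underline m_n(z)|^4=O(n^{-2})$, and Borel--Cantelli yields the a.s.\ limit $0$. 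The identification of $\lim_n\mathbb E\,\underline m_n(z)$ with a solution of \eqref{mpequation} is carried out jointly with Step 2.

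\emph{Step 2 (the equation).} By Gaussian rotation invariance we may assume $\Sigma=\diag(\sigma_1,\dots,\sigma_N)$, so that $\underline B_n=\frac1n\sum_{i=1}^N\sigma_i\eta_i\eta_i^*$ with $\eta_i\in\mathbb{R}^n$ the i.i.d.\ standard-Gaussian rows of $\Xi$ -- an honest sum of $N$ rank-one terms. With $\underline B_{(i)}:=\underline B_n-\frac{\sigma_i}n\eta_i\eta_i^*$, the identity $\underline B_n(\underline B_n-zI)^{-1}=I+z(\underline B_n-zI)^{-1}$ and the Sherman--Morrison formula $(\underline B_n-zI)^{-1}\eta_i=\big(1+\tfrac{\sigma_i}n\eta_i^*(\underline B_{(i)}-zI)^{-1}\eta_i\big)^{-1}(\underline B_{(i)}-zI)^{-1}\eta_i$ yield, after taking normalized traces,
\[
1+z\,\underline m_n(z)=\frac1n\sum_{i=1}^N\frac{\sigma_i\,\beta_i^{(n)}}{1+\sigma_i\,\beta_i^{(n)}},\qquad \beta_i^{(n)}:=\tfrac1n\eta_i^*(\underline B_{(i)}-zI)^{-1}\eta_i .
\]
The trace lemma (concentration of Gaussian quadratic forms) together with the rank-one resolvent bound shows that $\beta_i^{(n)}$ is close to $\underline m_n(z)$ uniformly in $i$, while the denominators stay bounded away from $0$ because $\operatorname{Im} z>0$ fixes the sign of their imaginary parts. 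Replacing $\beta_i^{(n)}$ by $\underline m_n(z)$ and letting $N\to\infty$ with $N/n\to c$ and $F^\Sigma\Rightarrow F$ then collapses the display to $1+zv=c\,v\int_0^\infty\frac{x}{1+xv}\,F(dx)$, i.e.\ to \eqref{mpequation}, with $v:=\lim_n\underline m_n(z)$. The hard part will be exactly this limiting step: one must control $\max_i|\beta_i^{(n)}-\underline m_n(z)|$, keep the random denominators away from $0$, discard the contribution of atypically large eigenvalues of $\Sigma$ by a tightness argument (legitimate since $F$ is a probability measure), and justify passing to the limit, for which I would use that $\{v_{[X]^N_n}\}$ is a locally bounded analytic family on $\mathbb C^+$, so that the a.s.\ pointwise limit exists, is analytic, and $\mathbb E\,\underline m_n(z)\to v(z)$.

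\emph{Step 3 (uniqueness and representation).} Suppose $v_1,v_2$ both solve \eqref{mpequation} within the class of Stieltjes transforms of nonnegative measures. Subtracting the two equations and factoring out $v_1-v_2$ gives
\[
(v_1-v_2)\Big(1-c\,v_1 v_2\int_0^\infty\frac{x^2}{(1+xv_1)(1+xv_2)}\,F(dx)\Big)=0 .
\]
Comparing imaginary parts in \eqref{mpequation} itself gives $c\,|v_i|^2\int_0^\infty\frac{x^2}{|1+xv_i|^2}\,F(dx)=1-|v_i|^2\,\operatorname{Im} z/\operatorname{Im} v_i<1$ (using $\operatorname{Im} v_i>0$), so by Cauchy--Schwarz the second factor above has modulus $<1$ and hence $v_1\equiv v_2$ on $\mathbb C^+$. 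That a solution genuinely is a Stieltjes transform is supplied by Step 2: each prelimit $v_{[X]^N_n}=\underline m_n$ is the Stieltjes transform of a probability measure on $[0,\infty)$, this class is closed under pointwise convergence on $\mathbb C^+$, and the representing measure of $v$ is then recovered by Stieltjes inversion, with total mass fixed by the $z\to\infty$ asymptotics of $v$.
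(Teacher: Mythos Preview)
The paper does not prove Theorem~\ref{th3} at all: it is presented in Section~2.2 as a review of known results, attributed to \cite{MP} for diagonal $\Sigma$ and to \cite{S} for the general case, and no argument is given in the paper beyond these citations. There is therefore no ``paper's own proof'' to compare against.

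That said, your sketch is exactly the classical Stieltjes-transform approach of the cited references (in the form later streamlined by Bai and Silverstein), and the main ingredients are correctly identified: the identification of $v_{[X]^N_n}$ with the Stieltjes transform $\underline m_n$ of the companion matrix $\underline B_n$, martingale-difference concentration with the rank-one resolvent bound, the Sherman--Morrison/leave-one-out identity for the quadratic forms $\beta_i^{(n)}$, replacement of $\beta_i^{(n)}$ by $\underline m_n(z)$ via the trace lemma, and the uniqueness argument through imaginary parts and Cauchy--Schwarz. The algebra in Step~2 checks out and does collapse to \eqref{mpequation}. The only point where your outline is genuinely thin is the passage to the limit in the averaged identity when $F^\Sigma\Rightarrow F$ without any boundedness assumption on the $\sigma_i$: the map $x\mapsto x/(1+xv)$ is bounded on $[0,\infty)$ for $\operatorname{Im} v>0$, so weak convergence suffices, but you should say this rather than invoke a separate ``tightness'' truncation. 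With that clarification, your argument matches the literature the paper cites.
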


In practice it is of course impossible to check whether the spectral distribution $F^{\Sigma}$ converges as $N\rightarrow \infty$. 
A pragmatic solution to this problem is to assume that $N$ is large enough, so that $F^{\Sigma}$ can be identified with its theoretical
limit $F$. In the next step, as proposed in \cite{EK}, discretization and convex optimization can be applied to construct a numerical
algorithm to compute the function $F$ from Mar\v{c}enko-Pastur equation \eqref{mpequation}. At this step the approximation $v_{[X]^N_n}(z)\approx v(z)$
can be used. Finally, since we have identified  $F^{\Sigma}$ with $F$, the spectral density of the covariance matrix $\Sigma$ can be 
recovered from $F$. This procedure shows the importance of Mar\v{c}enko-Pastur equation for statistical inference.

In the work \cite{ZL} the authors propose an extension of this procedure to time-varying matrices $f_s$, where the time variation is described
by a scalar function. More precisely, they consider models of the type \eqref{xdef} with 
\[
f_s = a_s \Sigma^{1/2},
\] 
where $a: [0,1] \rightarrow \R$ is a scalar function and $\Sigma$ is a positive definite matrix with $\text{tr}(\Sigma)=N$ (possibly random, but independent of $W$).
In this setting the Mar\v{c}enko-Pastur law for $[X]^N_n$ can not be expected to hold in general as it has been demonstrated in \cite[Proposition 3]{ZL}.
The functional $[X]^N_n$ requires a modification to satisfy the Mar\v{c}enko-Pastur equation \eqref{mpequation}. Such a modification is given as
\begin{align} \label{hatx}
\raisebox{0pt}[1pt][1pt]{$\widehat{[X]}$}_n^N:= \frac{\text{tr}([X]^N_n)}{n} \sum_{i=1}^n  
\frac{\left(X_{\frac in} - X_{\frac{i-1}{n}} \right) \left(X_{\frac in} - X_{\frac{i-1}{n}} \right)^{*}}
{|X_{\frac in} - X_{\frac{i-1}{n}}|^2},
\end{align}   
where $|\cdot|$ denotes the Euclidean norm. Intuitively speaking, the proposed transformation of the original statistic $[X]^N_n$ eliminates the scalar
variation $a_s$ and the methods of \cite{EK} become applicable. Indeed, under certain conditions, the spectral distribution $F^{\widehat{[X]}_n^N}$
is connected to $F^\Sigma$ through the Mar\v{c}enko-Pastur equation \eqref{mpequation}. We refer to \cite[Theorem 2]{ZL} for a detailed exposition
of the asymptotic theory.

\subsection{Remarks on conditions of Theorem \ref{th1}}
\label{miscdisc}

In this subsection we provide a discussion of conditions of Theorem \ref{th1}. 

First of all, we remark that the mixed spectral moment condition at \eqref{msm} is a rather strong condition, which however seems 
to be necessary according to our proofs. Nevertheless, in some special cases this assumption can be replaced by an easier condition.
For instance, in the setting of a constant function $f$, i.e. $T_1=\ldots=T_m=T$, a necessary condition for Theorem \ref{th1} to hold
becomes
\begin{align} \label{altcond}
F^{TT^*} \longrightarrow F,
\end{align}
where $F^{TT^*}$ is the spectral distribution of $TT^*$ and the convergence is in distribution almost surely towards a non-random distribution function
$F$. This assumption is used in classical works \cite{SB,YK}. In this framework the boundedness of the operator norm at (i) of Theorem \ref{th1}
is not required as this condition can be overcome by a truncation argument. More precisely, defining $F^{TT^*}_{\tau} (x):= N^{-1} \sum_{i=1}^N
1_{\{\lambda_i\leq x\wedge \tau\}}$, assumption \eqref{altcond} implies the convergence 
\[
F^{TT^*}_{\tau} \longrightarrow F_{\tau},
\]  
where $F_{\tau}$ is a non-random distribution function, for all $\tau >0$. The convergence of moments result similar to \eqref{moments} is then
proved by showing the corresponding assertion for a fixed $\tau$ and letting $\tau \rightarrow \infty$. We refer to e.g. \cite{BS} for a detailed
exposition. Also the condition \eqref{msm} of  
Theorem \ref{th1} follows directly from \eqref{altcond} and boundedness of $\|TT^*\|_{op}$ due to the obvious relation 
\[
\frac 1N \text{\rm tr} \left( TT^* \right)^k = \int x^k F^{TT^*} (dx).
\]  
However, in the general framework of \eqref{fdef} the convergence of, say, joint spectral distribution of matrices $T_1T_1^{*}, \ldots, T_mT_m^{*}$ 
is not sufficient
to conclude convergence of mixed spectral moments at \eqref{msm}. The reason is that the behaviour of the expression at \eqref{msm}
is not solely determined by the eigenvalues of the involved matrices, but crucially depends on their eigenspaces. For the very same reason 
the truncation argument of \cite{YK} does not work, and spectral boundedness at (i) of Theorem \ref{th1} has to be assumed explicitly. Therefore
it seems hard to avoid imposing condition \eqref{msm}. Let us remark however that when matrices $T_1T_1^{*}, \ldots, T_mT_m^{*}$ have the same eigenspaces
for all $N$, i.e. 
there exist eigenvectors $v_1, \ldots v_N$ such that $T_lT_l^{*} v_i= \lambda_i^{(l)}v_i$, 
then conditions (i) and (ii) of Theorem \ref{th1} can be replaced by assuming the almost sure weak convergence of the joint spectral distribution       
\[
F^{(T_1,...,T_m)}(x_1,...,x_m)=\frac 1N \sum_{i=1}^N 1_{\{\lambda_i^{(1)}\leq x_1,...,\lambda_i^{(m)}\leq x_m\}}
\]
towards a non-random distribution function $F$.

It is worth noticing that in the framework of free probability the mixed moment condition is referred to as the convergence of the joint distribution of the noncommutative random variables $T_1T_1^*,...,T_mT_m^*$, as $N\to \infty.$ See \cite{Bi,VDN} for an overview of this theory and its applications to random matrix theory. 
In particular, asymptotic freeness of $T_1T_1^*,...,T_mT_m^*$ allows to weaken the mixed moment condition. Denoting for $N\times N$ random matrices $\tau_N(A)=\frac 1N \E[\tr(A)]$, the matrices $T_1T_1^*,...,T_mT_m^*$ are asymptotically free if for all $i_1\neq i_2\neq...\neq i_k$ and all $p_1,...,p_k>0$ 
\[\lim_{N\to\infty}\tau_N \left[\left( (T_{i_1}T_{i_1}^*)^{p_1}-\tau_N \left( (T_{i_1}T_{i_1}^*)^{p_1}\right)\right)\cdots \left( (T_{i_k}T_{i_k}^*)^{p_k}-\tau_N \left((T_{i_1}T_{i_1}^*)^{p_1}\right)\right)\right]=0.\]
By linearity of $\tau_N$ it is then obvious that all mixed limiting moments exist if and only if the spectral distributions $F^{T_iT_i^*}$ converge to nonrandom limiting distributions $F_i$ with finite moments of all orders for $i=1,...,m$, almost surely.

\subsection{Definition of $c_{r,\nu,\bfl'}$}
\label{cdef}
In this subsection we give the definition of the constant $c_{r,\nu,\bfl'}$ that appears in Theorem \ref{th1}.

Given $\bfl'\in\{1,...,m\}^k$ and $\nu_1,...,\nu_r$ with $\nu_1+\dots+\nu_r=k$, we let $\bfl^{(a)}=(l^{(a)}_1,...,l^{(a)}_{\nu_a})\in\{1,...,m\}^{\nu_a}$ such that $\bfl'=(\bfl^{(1)},...,\bfl^{(r)}).$ We recall the definition 
\[n_l^{(a)}=\begin{cases}\#\{j\: : \:l^{(1)}_j=l\} & \text{if $a=1$,}\\
\#\{j\neq 1\: : \: l^{(a)}_j=l\}& \text{else.}
\end{cases}\]
Given a tree, i.e. a connected graph without cycles, $G$ with $r$ vertices $H_1,...,H_r$, we define for $l\in\{1,...,m\}$ and $a\in\{1,...,r\}$ numbers $n_l^{(a),G}$ in the following way: Let $H_{a_1},...,H_{a_p}$ be the vertices adjacent to $H_a$ in $G$ (i.e. the vertices connected to $H_a$ by a path of length 1), where we leave out the vertex that lies on the path from $H_a$ to $H_1$, if $a>1$. We set 
\[n_l^{(a),G}= \#\{j\in\{1,...,p\}\: :\: l^{(a_j)}_1=l\}.\]
Then, we have
\begin{align*}
c_{r,\nu,\bfl'}&= \sum_{G}\frac{1}{|S_{\bfl',G}|}\prod_{l=1}^m \prod_{a=1}^{r}\frac{n_l^{(a)}!}{(n_l^{(a)}-n_l^{(a),G})!} 1_{\{n_l^{(a),G}\leq n_l^{(a)}\}},
\end{align*}
where the summation runs for all trees $G$ on $\{H_1,...,H_r\}$. Here, $S_{\bfl',G}$ is the set of all permutations $\pi$ on the $\{2,...,r\}$ for which at least one of the following holds:
\begin{itemize}
\item [{\it (i)}] $\bfl^{(\pi(p))}\neq\bfl^{(p)}$ for some $p\in\{2,...,r\}$ 
\item [{\it (ii)}] $G_\pi\neq G$, where $G_\pi$ is the graph obtained from $G$ by permuting the vertices $H_2,...,H_r$ according to $\pi.$
\end{itemize}

\section{Proof}
\label{sec3}
\setcounter{equation}{0}
\renewcommand{\theequation}{\thesection.\arabic{equation}}

For the proof of Theorem \ref{th1} we rely on the well known moment convergence theorem.

\begin{theo}\label{MCT}
Let $(F_n)$ be a sequence of p.d.f.s with finite moments of all orders $m_{k,n}=\int x^k dF_n(x)$. 
Assume $m_{k,n}\to m_k$ for $n\to\infty$ for $k=1,...$ where 
\begin{itemize}
\item[(a)] $m_k<\infty$ for all $k$ and 
\item [(b)]$\sum_{k=1}^\infty [m_{2k}(F)]^{-\frac 1 {2k}}=\infty.$
\end{itemize}
Then, $F_n$ converges weakly to the uniquely determined probability distribution function $F$ with moment sequence $(m_k).$
\end{theo}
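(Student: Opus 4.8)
The plan is to run the classical method-of-moments argument in four steps: tightness, subsequential extraction, transfer of moments to the limit, and determinacy. First I would establish tightness of $(F_n)$. Since $m_{2,n}\to m_2<\infty$, the quantity $\sup_n m_{2,n}$ is finite, and Chebyshev's inequality gives $\int_{|x|>K}dF_n(x)\leq m_{2,n}/K^2$, which is uniformly small in $n$ once $K$ is large. Hence $(F_n)$ is tight, and by Helly's selection theorem every subsequence of $(F_n)$ has a further subsequence converging weakly to some probability distribution function $G$; tightness guarantees that no mass escapes to infinity, so $G$ is a genuine p.d.f.

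The second step is to show that any such subsequential limit $G$ has moment sequence $(m_k)$. Fix $k$. Since $m_{2k+2,n}\to m_{2k+2}<\infty$, we have $\sup_n m_{2k+2,n}<\infty$, so the family $\{x\mapsto x^k\}$ is uniformly integrable with respect to the $F_n$ along the chosen subsequence. Combined with the weak convergence $F_{n_j}\Ra G$, this yields $\int x^k\,dF_{n_j}(x)\to\int x^k\,dG(x)$. Because the left-hand side also converges to $m_k$ by hypothesis, we obtain $\int x^k\,dG(x)=m_k$ for every $k$.

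The third step is determinacy. By assumption (b), Carleman's condition holds for $(m_k)$, so the Hamburger moment problem for $(m_k)$ has a unique solution; in particular a probability distribution function $F$ with moments $(m_k)$ exists and is unique. Consequently every subsequential weak limit of $(F_n)$ coincides with $F$. Finally, a standard subsequence argument closes the proof: if $F_n$ did not converge weakly to $F$, there would be a bounded continuous $\phi$, an $\varepsilon>0$, and a subsequence along which $|\int\phi\,dF_n-\int\phi\,dF|\geq\varepsilon$; by tightness this subsequence has a further weakly convergent sub-subsequence whose limit must be $F$, a contradiction. Hence $F_n$ converges weakly to $F$.

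The two steps requiring genuine care are the uniform-integrability argument that promotes weak convergence to moment convergence, and the appeal to determinacy. I expect the latter to be the real obstacle for a fully self-contained treatment, since Carleman's uniqueness theorem is itself a substantial classical result; in practice I would simply cite it, together with Helly's theorem, from a standard reference (for instance the random matrix monographs \cite{BS,AGZ}), so that the argument reduces to the elementary estimates in the first two steps.
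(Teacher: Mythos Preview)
Your argument is the standard and correct route to the moment convergence theorem: tightness from boundedness of second moments, Helly extraction, uniform integrability via the higher even moments to identify the moments of any subsequential limit, and Carleman's criterion for determinacy to force all limits to coincide. There is nothing to object to.

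By comparison, the paper does not prove this statement at all: it simply records that condition (b) is Carleman's condition and refers the reader to \cite[Theorem 3.3.11]{D}. So your proposal is substantially more self-contained than what the paper does; the only caveat, which you already flag, is that Carleman's uniqueness theorem must itself be quoted from a reference, so in the end both treatments bottom out in a citation --- yours just isolates more precisely which classical fact is doing the work.
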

Condition (b) is known as {\it Carleman's condition.} For the proof we refer to \cite[Theorem 3.3.11]{D}.

 The strategy for proving Theorem \ref{th1} is the following:
In the next subsection we introduce colored $Q^+$-graphs. In the two subsections thereafter, these graphs take a key role in showing that
\begin{align}\label{expmklim}
\E[m_k(F^{[X]^N_n})]\to m_k
\end{align}
holds for all $k$, where $m_k$ is defined as in Theorem \ref{th1}.

Then, in subsection \ref{m_kcon} we argue that
\begin{align}\label{mklim}
 \E\left[\left(m_k(F^{[X]^N_n})-\E[m_k(F^{[X]^N_n})]\right)^4\right] = O(N^{-2}),
\end{align}
which yields $m_k(F^{[X]^N_n})\to m_k$, almost surely, by virtue of the Borel-Cantelli Lemma. Finally, verifying that the sequence $(m_k)$ satisfies Carleman's condition and applying Theorem \ref{MCT} completes the proof.

Our proof extends the proof given in \cite{YK} (see also \cite{BS} and \cite{B}) for the case of constant function $f$.
In order to deal with our more general setting we introduce colored graphs and use new combinatorical arguments.

Throughout the proof, we denote the entries of the matrices $T_l$ by $(T_l)_{ij}=t^{(l)}_{ij}$, and likewise for other matrices, in order to maintain readability.


\subsection{Colored $Q^+$-graphs}


For $l=1,...,m$ let $Y_l$ be $N\times [n(t_l-t_{l-1})]$ matrices containing i.i.d. standard normal variables independent of $T_l$, where $[n(t_l-t_{l-1})]$ denotes the integer part of $n(t_l-t_{l-1})$.
Set
\[\wX:= \frac 1 n \sum_{l=1}^m T_l Y_l Y_l^*T_l^*.\]
By virtue of the well known fact 
\[\|F^A-F^B\|_\infty\leq \frac 1 N \text{rank}(A-B)\]
for $N\times N$ symmetric matrices $A$ and $B$, it is easy to see that
\begin{align}\label{error}
\|F^{[X]^N_n}-F^{\wXpow}\|_\infty \to 0,
\end{align}
as $n,N\to\infty$. Therefore, we can  replace $\XN$ by $\wX$ for the proof of Theorem \ref{th1}.
Conditioning on all $T_l$ as given allows us, moreover, to restrict ourselves to nonrandom $T_l$ for the proof.

In order to show the convergence of the expected $k$-th spectral moment $\E[m_k(F^{\wXpow})]$ we are faced with the equation
\begin{align}\label{expmk}
\E[m_k(F^{\wXpow})]	&=\frac 1 {N}\frac 1 {n^k} \E\left[\tr\left(\sum_{l=1}^m T_lY_lY_l^*T_{l}^*\right)^k\right]
						\nonumber\\
						&= 	N^{-1} n^{-k} \E\left[\sum_{{\bf l},{\bf i},{\bf j}} t^{(l_1)}_{i_1 i_2}y^{(l_1)}_{i_2 j_1}
						y^{*(l_1)}_{ j_1 i_3}t^{*(l_1)}_{i_3 i_4}\cdots t^{(l_k)}_{i_{3k-2} i_{3k-1}}y^{(l_k)}_{i_{3k-1} j_k} 
						y^{*(l_k)}_{ j_k i_{3k}}t^{*(l_k)}_{i_{3k} i_{1}}\right],\nonumber\\
 \end{align}
Here, the summation runs over all $\bfl=(l_1,...,l_k)\in \{1,...,m\}^k$ and $\bfi=(i_1,...,i_{3k})\in \{1,...,N\}^{3k}$. For all $a$, the index $j_a$ varies over $\{1,...,[n(t_{l_a}-t_{l_a-1})]\}$.

In order to carry out the summation we introduce colored $Q^+$-graphs which correspond to the summands in the above equation.
These graphs are related to $Q$-graphs as used by the authors of \cite{YK}.

\begin{figure}
 \begin{tikzpicture}[>=latex,scale=0.6]
 \def\horizontaledge{[->] parabola bend +(1,0.15) +(2,0)}
 
 \draw (-12,2)--(12,2) node[right]{\bf i};
 \draw (-12,-2)--(12,-2) node[right]{${\bfj}^{(1)}$};
 \draw (-12,-3)--(12,-3) node[right]{${\bfj}^{(2)}$};

 
 \draw [green!70!black](-11,2) \horizontaledge;
 \draw[green!70!black,->] (-9,2) parabola bend +(-1,0.5) ++(-2,0);

 \draw [green!70!black](-7,2) \horizontaledge;
 \draw[red]   (-5,2) \horizontaledge;
 \draw[red]   (-3,2) \horizontaledge;
 \draw[green!70!black,->] (-1,2) parabola bend +(-3,1) +(-6,0); 
 
 \draw[red]   (1,2) \horizontaledge;
 \draw[red]   (3,2) \horizontaledge;
 \draw[red]   (5,2) \horizontaledge;
 \draw[green!70!black]   (7,2) \horizontaledge;
 \draw[green!70!black]   (9,2) \horizontaledge;
 \draw[red,->] (11,2) parabola bend +(-5,1.5) +(-10,0);


 \draw[green!70!black,->] (-9,2) ..controls (-8.75,0) .. (-8,-2);
 \draw[green!70!black,->] (-8,-2) ..controls (-8.25,0) .. (-9,2);
 \draw[green!70!black,->] (-8,-2) ..controls(-7.75,0).. (-7,2);
 \draw[green!70!black,->] (-7,2) ..controls(-7.25,0).. (-8,-2);
 
 \draw[red,->] (-3,2) ..controls +(0.5,-2.5)..+(2,-5);
 \draw[red,->] (-1,-3) ..controls +(-0.5,2.5).. +(-2,5);
 \draw[red,->] (1,2) ..controls +(-0.5,-2.5)..+(-2,-5);
 \draw[red,->] (-1,-3) ..controls +(0.5,2.5).. +(2,5);
 
 \draw[red,->] (5,2) ..controls +(-0.25,-2.5).. +(0,-5);
 \draw[red,->] (5,-3) ..controls +(0.25,2.5).. +(0,5);
 
 \draw[green!70!black,->] (9,2) ..controls +(-0.25,-2).. +(0,-4);
 \draw[green!70!black,->] (9,-2) ..controls +(0.25,2).. +(0,4);


\draw (-11,2) node [below] {\tiny $i_1=i_{19}$};
\draw (-9.4,2) node [above right] {\tiny $i_2=i_{18}$};
\draw (-7.2,2) node [below right] {\tiny $i_3=i_{17}$};
\draw (-5,2) node [above] {\tiny $i_4$};
\draw (-3,2) node [above] {\tiny $i_5=i_{15}$};
\draw (-1,2) node [below] {\tiny $i_{16}$};
\draw (1.4,2) node [above left] {\tiny $i_6=i_{14}$};
\draw (3,2) node [below] {\tiny $i_7$};
\draw (5,2) node [above] {\tiny $i_8=i_9$};
\draw (7,2) node [below] {\tiny $i_{10}$};
\draw (9,2) node [above] {\tiny $i_{11}=i_{12}$};
\draw (11,2) node [below] {\tiny $i_{13}$};


\draw (-8,-2) node [below] {\tiny $j_1=j_{6}$};
\draw (-1,-3) node [below] {\tiny $j_2=j_5$};
\draw (5,-3) node [below] {\tiny $j_3$};
\draw (9,-2) node [below] {\tiny $j_4$};

\end{tikzpicture}
\caption{A colored $Q^+$-graph for $k=6$ and $m=2$. Here, $\bfl=(1,2,2,1,2,1)$ where $1=$ green and $2=$ red. }
\label{Q+ex}
\end{figure}
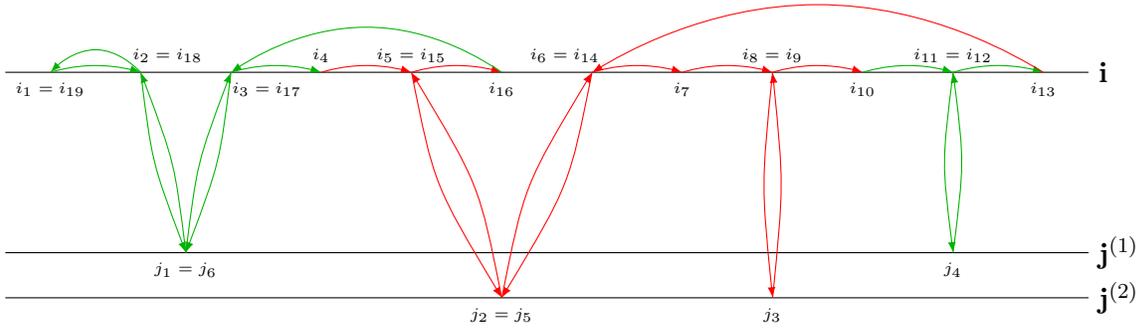

\begin{defn}
Let $k>0$. Given multi-indices $\bfl\in\{1,\dots,m\}^k$, ${\bf i}\in \{1,...,N\}^{3k}$, and ${\bf j}=(j_1,...,j_k)$ where $j_a\in \{1,\dots,[n(t_{l_a}-t_{l_a-1})]\}$, we define the {\it colored $Q^+$-graph} $Q_{\bfl,\bfi,\bfj}$ in the following way.
Choose $m$ arbitrary colors $c_1,...,c_m$. For brevity we will usually not distinguish between $l\in\{1,...,m\}$ and its associated color $c_l.$
Draw $m+1$ horizontal lines, the $\bfi$-, $\bfj^{(1)}$-, \dots, $\bfj^{(m)}$-line. Mark the numbers $\{1,...,N\}$ on the $\bfi$-line and, for all $l,$ the numbers $\{1,...,[n(t_l-t_{l-1})]\}$ on the $\bfj^{(l)}$-line.  For $s=1,...,k,$ draw horizontal edges colored in $l_s$ from $i_{3s-2}$ to $i_{3s-1}$ and from $i_{3s}$ to $i_{3s+1}$ with the convention that $i_{3k+1}= i_1$. For $s=1,...,k$, draw a vertical (down) edge from $i_{3s-1}$ to $j_{s}$ on the $\bfj^{(l_s)}$-line and a vertical (up) edge from $j_{s}$ to $i_{3s}$, both edges also colored in $l_s$.
 The result is a connected directed graph forming a cycle. It consists of $4k$ edges and always 4 subsequent edges are of the same color.
Figure \ref{Q+ex} provides an example of a colored $Q^+$-graph.
\end{defn}

There is a one to one correspondence between colored $Q^+$-graphs and the summands of (\ref{expmk}). Highlighting this correspondence we introduce the notation 
\begin{align}\label{tydef}
(ty)_{Q_{\bfl,\bfi,\bfj}}= \E\left[ t^{(l_1)}_{i_1 i_2}y^{(l_1)}_{i_2 j_1}
						y^{*(l_1)}_{j_1 i_3}t^{*(l_1)}_{i_3 i_4}\cdots t^{(l_k)}_{i_{3k-2} i_{3k-1}}y^{(l_k)}_{i_{3k-1} j_k} 
						y^{*(l_k)}_{j_k i_{3k} }t^{*(l_k)}_{i_{3k} i_{1}}\right].
\end{align}

We will organize the colored $Q^+$-graphs in three categories and then derive the limit for (\ref{expmk}) if the summation runs only for graphs from one of these categories. To this end, the following definitions are required.

\begin{defn}
The {\it head} $H(Q_{\bfl,\bfi,\bfj})$ of a colored $Q^+$-graph $Q_{\bfl,\bfi,\bfj}$ is the subgraph of all vertices on the \bfi-line and all horizontal edges.
\end{defn}

\begin{defn}
The {\it pillar} of a colored $Q^+$-graph $Q_{\bfl,\bfi,\bfj}$ is the Graph obtained from $Q_{\bfl,\bfi,\bfj}$ by first gluing together coincident vertical edges, then gluing all vertices on the \bfi-line that are connected in the head of $Q_{\bfl,\bfi,\bfj}$, and then deleting all horizontal edges. The pillar is undirected and colorless.
See Figure \ref{pilex} for an example.
\end{defn}

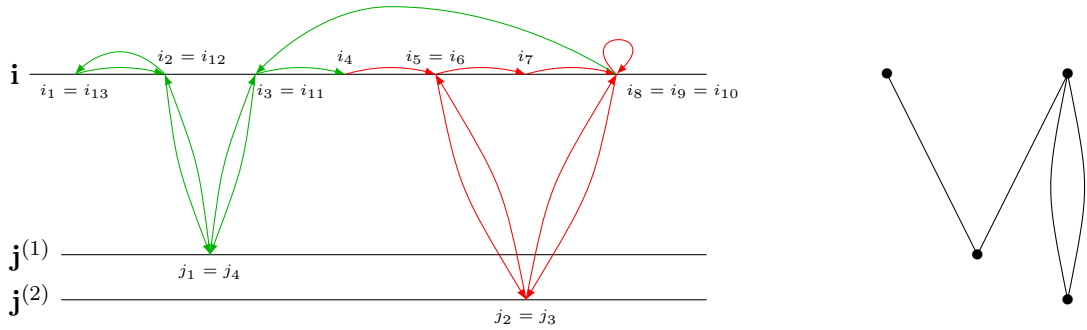
\begin{figure}
 \begin{tikzpicture}[>=latex,scale=0.6]
 \def\horizontaledge{[->] parabola bend +(1,0.15) +(2,0)}
 
  \draw (0,2)node[left]{\bf i}--(15,2) ;
 \draw (0.7,-2)node[left]{$\bfj^{(1)}$}--(15,-2);
 \draw (0.7,-3)node[left]{$\bfj^{(2)}$}--(15,-3);
 
 \draw [green!70!black](1,2) \horizontaledge;
 \draw [green!70!black,->] (3,2) parabola bend +(-1,0.5) ++(-2,0);

 \draw [green!70!black](5,2) \horizontaledge;
 \draw[red]   (7,2) \horizontaledge;
 \draw[red]   (9,2) \horizontaledge;
 \draw [red](11,2) \horizontaledge;
 \draw[red,->] (13,2) ..controls +(-1,1) and +(1,1).. (13,2); 
  \draw[green!70!black,->] (13,2) parabola bend +(-5,1.5) +(-8,0);

  \draw[green!70!black,->] (3,2) ..controls +(0.25,-2) .. +(1,-4);
  \draw[green!70!black,->] (4,-2) ..controls +(-0.25,2) .. +(-1,4);
  \draw[green!70!black,->] (5,2) ..controls +(-0.25,-2) .. +(-1,-4);
  \draw[green!70!black,->] (4,-2) ..controls +(0.25,2) .. +(1,4);

  \draw[red,->] (9,2) ..controls +(0.5,-2.5) .. +(2,-5);
  \draw[red,->] (11,-3) ..controls +(-0.5,2.5) .. +(-2,5);
  \draw[red,->] (13,2) ..controls +(-0.5,-2.5) .. +(-2,-5);
  \draw[red,->] (11,-3) ..controls +(0.5,2.5) .. +(2,5);

 \draw (1,2) node [below] {\tiny $i_1=i_{13}$};
 \draw (2.6,2) node [above right] {\tiny $i_2=i_{12}$};
 \draw (4.8,2) node [below right] {\tiny $i_3=i_{11}$};
 \draw (7,2) node [above] {\tiny $i_4$};
 \draw (9,2) node [above] {\tiny $i_5=i_{6}$};
 \draw (11,2) node [above] {\tiny $i_7$};
\draw (13,2) node [below right] {\tiny $i_8=i_9=i_{10}$};


 \draw (4,-2) node [below] {\tiny $j_1=j_4$};
 \draw (11,-3) node [below] {\tiny $j_2=j_3$};


  \def\vertex{ node{\small $\bullet$}}

\draw (19,2)\vertex;
\draw (19,2)  -- ++(2,-4) \vertex;
\draw (21,-2)-- ++(2,4) \vertex;
\draw (23,2)..controls +(-0.5,-2.5)..++(0,-5) \vertex;
\draw (23,-3)..controls +(0.5,2.5)..+(0,5);

 \end{tikzpicture}
\caption{A colored $Q^+$-graph in $\mathcal C_3$ and its pillar.}
\label{pilex}
\end{figure}

We divide the colored $Q^+$-graphs in the following three categories.
 Category $\mathcal C_1$ contains all graphs $Q_{\bfl,\bfi,\bfj}$ such that every down edge of $Q_{\bfl,\bfi,\bfj}$ coincides with exactly one up edge, and such that the pillar of $Q_{\bfl,\bfi,\bfj}$ is a tree. An example of a colored $Q^+$-graph in this category is the graph in Figure \ref{Q+ex}. Note that coincident vertical edges are always of the same color.
Category $\mathcal C_2$ contains all colored $Q^+$-graphs that have at least one single vertical edge.
Category $\mathcal C_3$ contains all other colored $Q^+$-graphs. The graph in Figure \ref{pilex} is in this category since its pillar contains a cycle. 

Now we can split the sum (\ref{expmk}) into
\begin{align}\label{catsum}
\E[m_k(F^{\wXpow})]=N^{-1} n^{-k}\left[\sum_{Q\in \mathcal C_1}(ty)_{Q}+\sum_{Q\in \mathcal C_2}(ty)_{Q}+\sum_{Q\in \mathcal C_3}(ty)_{Q}\right].
\end{align}

The second sum vanishes since a vertical edge in $Q$ which is single corresponds to a factor $y^{(l)}_{ij}$ in (\ref{tydef}) that occurs only once. Hence, the expectation is $0$ due to independence.

In the following section we argue that the third sum is negligible for $n,N\to\infty.$ In the section thereafter, the limit for the first sum is derived.


\subsection{The limit for the sum of $\mathcal C_3$ graphs}


We make the following conventions on notation: 
For a $Q^+$-graph $Q$ we denote by $r$ the number of connected components of the head. By $s_1,...,s_m$ we denote the numbers of noncoincident $\bfj^{(1)},...,\bfj^{(m)}$-vertices, respectively, and $s=s_1+\dots+s_m$. Denote further by $p$ the number of noncoincident vertical edges of $Q$. 

\begin{lem}\label{c3prop}
For a category $\mathcal C_3$ colored $Q^+$-graph $Q_{\bfl,\bfi,\bfj}$ it holds that $p+s-1 <k$. Furthermore, the degree of each vertex of $H(Q_{\bfl,\bfi,\bfj})$ is at least 2.
\end{lem}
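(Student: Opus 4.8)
The plan is to count edges and vertices in the pillar of $Q_{\bfl,\bfi,\bfj}$ and exploit the structure of a $\ca_1$-type graph as a reference point. Recall that a colored $Q^+$-graph for a word of length $k$ has $2k$ horizontal edges and $2k$ vertical edges, and that $4$ consecutive edges (two horizontal, two vertical) carry the same color and are attached to the same $j_s$-vertex. First I would analyze the pillar: after gluing coincident vertical edges and collapsing each connected component of the head to a single vertex, the pillar is an undirected graph whose vertex set consists of the $r$ head-components together with the $s$ noncoincident $\bfj$-vertices, so it has $r+s$ vertices, and whose edge set consists of the $p$ noncoincident vertical edges. Since $Q_{\bfl,\bfi,\bfj}$ is connected, so is its pillar, hence $p \geq (r+s)-1$. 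The key extra input for category $\ca_3$ is that, by definition, either some down edge fails to coincide with exactly one up edge (equivalently, since every single vertical edge would put $Q$ in $\ca_2$, some vertical multiplicity is $\geq 3$, or more precisely some up/down pairing is not a perfect matching), or the pillar contains a cycle; in the latter case $p \geq r+s$ strictly.

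Next I would relate $p$, $s$, $r$ and $k$. In the "generic" $\ca_1$ situation each of the $k$ down edges is matched with exactly one of the $k$ up edges, giving $p = k$ noncoincident vertical edges before any further gluing — wait, more carefully: in $\ca_1$ the $2k$ vertical edges split into $k$ coincident pairs, so $p=k$, and the pillar being a tree forces $p = (r+s)-1$, i.e. $r+s = k+1$. The claim $p+s-1<k$ in category $\ca_3$ should follow by showing that leaving $\ca_1$ can only \emph{decrease} the relevant count. If some vertical edge has multiplicity $\geq 3$ (the non-matching case), then among the $2k$ vertical edges the number $p$ of distinct ones satisfies $p \leq k-1$; combined with connectedness $p \geq r+s-1$ this already gives $s \leq p - r + 1 \leq k - r \leq k-1$, hence $p + s - 1 \leq (k-1) + (k-1) - 1$, which is not yet sharp enough — so the real work is a more careful simultaneous bound. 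The cleaner route is: always $p \leq k$ (since the $2k$ vertical edges come in same-color blocks and at best pair up), and always $r + s \leq p + 1$ wait that's connectedness the wrong way... Let me instead aim for the combined inequality $p + s \leq k$ on $\ca_3$ by a direct Euler-characteristic argument on the pillar, treating the cycle case ($p \geq r+s$) and the high-multiplicity case ($p \leq k-1$) separately and checking that each forces $p+s-1 < k$ given the baseline identities for $\ca_1$.

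For the second assertion — that every vertex of $H(Q_{\bfl,\bfi,\bfj})$ has degree at least $2$ in the head — I would argue directly from the construction: each index $i_s$ on the $\bfi$-line is an endpoint of the horizontal edge $i_{3t-2}\to i_{3t-1}$ or $i_{3t}\to i_{3t+1}$ coming from its block, and because the horizontal edges form a single closed cycle through all $2k$ of them (with the convention $i_{3k+1}=i_1$), every distinct vertex appearing on the $\bfi$-line is entered by at least one horizontal edge and left by at least one horizontal edge; coincidences among the $i$'s only increase degrees. Hence each head-vertex has degree $\geq 2$.

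The main obstacle I expect is getting the inequality $p+s-1<k$ genuinely sharp rather than off by a constant: the naive bounds $p\le k$ and connectedness $p\ge r+s-1$ must be combined with the defining property of $\ca_3$ in exactly the right way, and one has to be careful that gluing coincident vertical edges is accounted for consistently on both sides (it reduces $p$ but also can merge $\bfj$-vertices and thereby reduce $s$). I would handle this by pinning down the exact "deficiency" identity $k - p = (\text{number of coincidence-induced reductions among vertical edges})$ and $k+1 - (r+s) = (\text{cycle rank of the pillar}) + (\text{same reductions})$, from which the $\ca_3$ hypothesis (positive cycle rank or positive reduction beyond the perfect-matching count) yields the strict inequality.
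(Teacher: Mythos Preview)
Your pillar-counting strategy for the first inequality is exactly the paper's approach, but you are chasing a typo: the statement should read $r+s-1<k$, not $p+s-1<k$. The paper's own proof establishes $r+s-1<k$, and that is the quantity used downstream (the bound $N^{r+s}$ in Lemma~\ref{C_3sum}). The version with $p$ is simply false: in the $\mathcal C_3$ subcase where every vertical edge has multiplicity exactly~$2$ but the pillar contains a cycle, one has $p=k$ and $s\geq 1$, so $p+s-1\geq k$. This is precisely why your attempts to sharpen the bound kept failing. Once the target is $r+s-1<k$, your two cases go through cleanly and coincide with the paper's argument: either all vertical multiplicities are $2$, so $p=k$ and the (non-tree) pillar forces $r+s-1<p=k$; or some multiplicity is $\geq 3$, so $p<k$ and connectedness gives $r+s-1\leq p<k$. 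No further ``deficiency identity'' is needed.

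Your argument for the second claim, however, has a genuine gap. The horizontal edges do \emph{not} form a closed cycle by themselves: the full edge sequence is
\[
i_1\to i_2\to j_1\to i_3\to i_4\to i_5\to j_2\to i_6\to\cdots,
\]
so vertices of the form $i_{3s-1}$ or $i_{3s}$ are incident to exactly one horizontal edge and one vertical edge. If such an $\bfi$-vertex happens to be distinct from every other $i_a$, its head-degree is~$1$, contradicting your ``entered by one horizontal edge and left by another'' claim. The correct argument (which is the paper's) is that in this situation the attached vertical edge cannot coincide with any other vertical edge, since coincidence would force equality with some other $i_a$; hence that vertical edge is single and the graph lies in $\mathcal C_2$. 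Since $\mathcal C_3$ excludes $\mathcal C_2$, every head-vertex must have degree at least~$2$.
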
 
\begin{proof}
The pillar of $Q_{\bfl,\bfi,\bfj}$ has $r+s$ vertices and $p$ edges and is connected. Therefore, $r+s-1\leq p$ where equality implies that the pillar is a tree. 
We distinguish two different cases.

{\it Case 1.} If every vertical edge of $Q_{\bfl,\bfi,\bfj}$ has coincidence multiplicity 2, it holds that $p=k$, since $Q_{\bfl,\bfi,\bfj}$ contains $2k$ vertical edges. If, in this case, the pillar would be a tree, $Q_{\bfl,\bfi,\bfj}$ would be in $\mathcal C_1$. Therefore, we have $r+s-1<p=k$.

{\it Case 2.} One vertical edge of $Q_{\bfl,\bfi,\bfj}$ has coincidence multiplicity larger 2. We have $p<k$ and thus $r+s-1\leq p<k$.

Every \bfi-vertex of $Q_{\bfl,\bfi,\bfj}$ connects either with at least two horizontal edges or with one horizontal and one vertical edge, which is then single. Therefore, if some vertex of $H(Q_{\bfl,\bfi,\bfj})$ has degree one, we have $Q_{\bfl,\bfi,\bfj}\in \mathcal C_2.$
\end{proof}

In order to show that the sum corresponding to $\mathcal C_3$ in (\ref{catsum}) is negligible for $N\to\infty$, we introduce the concept of isomorphic $Q^+$-graphs. 

\begin{defn} Two colored $Q^+$-graphs $Q_{\bfl,\bfi,\bfj}$ and $Q_{\bfl',\bfi',\bfj'}$ are {\it isomorphic}, or $Q_{\bfl,\bfi,\bfj}\sim Q_{\bfl',\bfi',\bfj'},$ if we can obtain $Q_{\bfl,\bfi,\bfj}$ from $Q_{\bfl',\bfi',\bfj'}$ by permuting the numbers on the lines $\bfi,\bfj^{(1)},...,\bfj^{(m)}$. In particular, $Q_{\bfl,\bfi,\bfj}\sim Q_{\bfl',\bfi',\bfj'}$ implies $\bfl=\bfl'$.
\end{defn}

\begin{lem}\label{C_3sum}
It holds that
\begin{align*}
E_3:=N^{-1} n^{-k}\sum_{Q\in \mathcal C_3}(ty)_{Q}\to 0
\end{align*}
for $N,n\to\infty$ with $N/n\to c\in(0,\infty).$
\end{lem}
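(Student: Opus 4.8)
The plan is to bound $|E_3|$ by counting the contribution of each isomorphism class of $\mathcal C_3$ graphs. First I would fix a representative $Q_{\bfl,\bfi,\bfj}$ in $\mathcal C_3$ and estimate $|(ty)_Q|$: since the $T_l$ have uniformly bounded operator norm (condition (i) of Theorem~\ref{th1}), one gets $|t^{(l)}_{ij}|\le\tau_0$ for all entries, so the product of the $2k$ $t$-factors is bounded by $\tau_0^{2k}$; the expectation of the product of $y$-factors is, by independence and Gaussianity, a bounded constant depending only on $k$ (each noncoincident vertical edge of multiplicity $e_v\ge2$ contributes $\E[|\mathcal N(0,1)|^{e_v}]$, a finite number, and there are at most $k$ such edges with total multiplicity $2k$). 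Hence $|(ty)_Q|\le C_k$ uniformly over all graphs in $\mathcal C_3$.

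Next I would count graphs. Two isomorphic graphs give the same value of $(ty)_Q$, so it suffices to bound the number of isomorphism classes times the size of each class. An isomorphism class is determined by $\bfl$ (at most $m^k$ choices, a constant) together with the combinatorial pattern of coincidences among the $\bfi$- and $\bfj$-vertices; the number of such patterns is bounded by a constant depending only on $k$. The size of an isomorphism class — i.e. the number of ways to assign actual labels to the $r$ distinct $\bfi$-components and the $s_1,\dots,s_m$ distinct $\bfj^{(l)}$-vertices — is at most $N^r \cdot \prod_{l=1}^m [n(t_l-t_{l-1})]^{s_l}\le C\, N^r n^{s}$. Therefore
\begin{align*}
|E_3|\le N^{-1}n^{-k}\sum_{Q\in\mathcal C_3}|(ty)_Q| \le C'_k\, N^{-1}n^{-k}\, N^{r_{\max}} n^{s_{\max}},
\end{align*}
where the maximum is taken over $\mathcal C_3$ classes. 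By Lemma~\ref{c3prop} every $\mathcal C_3$ graph satisfies $r+s\le p+s<k+1$, hence $r+s\le k$, and in fact $r+s-1<k$; combined with $N/n\to c\in(0,\infty)$ this gives $N^{-1}n^{-k}N^r n^s = O(n^{r+s-1-k})\cdot O(1) = O(n^{-1})\to0$. Summing the finitely many isomorphism classes preserves the bound, so $E_3\to0$.

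The main obstacle, and the point requiring the most care, is the bookkeeping that turns Lemma~\ref{c3prop} into the exponent count: one must verify that the number of free index labels is exactly governed by $r$ (for the $\bfi$-line) and $s=s_1+\dots+s_m$ (for the $\bfj$-lines), which uses that coincident vertical edges are automatically the same color and that the head's connected components are precisely what is glued in the pillar — so that the $N$-power is $N^r$ and not something larger. A secondary subtlety is making sure the Gaussian moment factor is genuinely bounded uniformly in $n,N$: this is where the hypothesis that each vertical edge in $\mathcal C_3\cup\mathcal C_1$ (after deleting $\mathcal C_2$) has multiplicity $\ge2$ matters, since a single edge would give expectation zero but we have already discarded those via the $\mathcal C_2$ sum. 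Once the exponent inequality $r+s-1<k$ is in hand, the conclusion is immediate, so essentially all the work is in the counting lemma and the uniform bound on $|(ty)_Q|$.
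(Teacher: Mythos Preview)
There is a genuine gap in your counting step. You bound $|(ty)_Q|\le C_k$ by estimating each matrix entry $|t^{(l)}_{ij}|\le\tau_0$, and then separately claim that an isomorphism class has size at most $N^r n^s$ because there are ``$r$ distinct $\bfi$-components'' to label. These two steps are inconsistent: isomorphism is a relabeling of the \emph{vertices} on the $\bfi$-line, not of the head components, so the size of an isomorphism class is governed by the number $v$ of distinct $\bfi$-vertices, which is typically much larger than $r$. For a head consisting of a single component (so $r=1$) that is a simple cycle on, say, four vertices, your entrywise bound followed by vertex counting yields $\tau_0^4 N^4$, whereas the correct order is
\[
\sum_{i_1,i_2,i_3,i_4} t_{i_1 i_2} t^{*}_{i_2 i_3} t_{i_3 i_4} t^{*}_{i_4 i_1}=\tr\big((TT^*)^2\big)=O(N).
\]
Thus with your argument the exponent of $N$ is $v$, not $r$, and the inequality $v+s-1<k$ simply does not hold for $\mathcal C_3$ graphs.

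The paper avoids this by \emph{not} decoupling the bound on $(ty)_Q$ from the sum over its isomorphism class. Instead it bounds $\sum_{Q\in[Q_3]}(ty)_Q$ directly via Theorem~A.35 of \cite{BS}, which is precisely a trace-type inequality: for a connected graph whose edges carry matrices with bounded operator norm, the sum over all vertex labelings of the corresponding entry product is $O(N)$ per connected component, regardless of how many vertices that component contains. This is what produces the factor $N^{r+s}$ (one $N$ per head component, one per $\bfj$-vertex) and makes Lemma~\ref{c3prop} applicable. Your proof can be repaired by replacing the entrywise estimate with this trace bound, but as written the crucial exponent $r$ is unjustified.
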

\begin{proof}
Observe the identity
\begin{align*}
E_3=N^{-1} n^{-k}\sum_{Q_3}\sum_{Q\in [Q_3]}(ty)_{Q},
\end{align*}
where the first summation is taken for a representative system of pairwise not isomorphic graphs in category $\mathcal C_3$ and the second summation for all $Q^+$-graphs isomorphic to $Q_3$. It is sufficient to show that for arbitrary $Q_3\in\mathcal C_3$ we have
\begin{align*}
N^{-1} n^{-k}\sum_{Q\in [Q_3]}(ty)_{Q}\ \to\ 0.
\end{align*}
Glue coincident vertical edges of $Q_3$ into colorless down edges. Let every vertical edge that connects with the $\bfj^{(l)}$-line correspond to the matrix
\[Y(\mu)=\left\{(\mu-1)!!\right\}_{N\times [n(t_l-t_{l-1})]},\]
where $\mu$ denotes the coincidence multiplicity of the edge.

 Applying Theorem A 35. of \cite{BS} and Lemma \ref{c3prop} yields that there is a constant $C_k$ such that 
\begin{align*}
N^{-1} n^{-k}\sum_{Q\in [Q_3]}(ty)_{Q} \leq C_k N^{-1} n^{-k} N^{r+s} = O(N^{-1}),
\end{align*}
 and the proof is complete.
\end{proof}


\subsection{Limit of the Expected $k$-th Spectral Moment}


In this subsection we derive the limit of the first sum in (\ref{catsum}). 
For a colored $Q^+$-graph $Q\in\mathcal C_1$, the expectation factor $\E\left[y^{(l_1)}_{i_2 j_1}y^{(l_1)}_{i_3 j_1}\cdots y^{(l_k)}_{i_{3k-1} j_k} y^{(l_k)}_{i_{3k} j_k}\right]$ of $(ty)_{Q}$ equals $1$. Therefore, 
\[N^{-1} n^{-k}\sum_{Q\in \mathcal C_1}(ty)_{Q}=N^{-1} n^{-k}\sum_{Q\in \mathcal C_1}(t)_{H(Q)}\]
depends on the heads of the graphs only.
Using the notations introduced in the last subsection, there are
\[\prod_{l=1}^m  [n(t_l-t_{l-1})]! \bigg/ \prod_{l=1}^m ([n(t_l-t_{l-1})]-s_l)!\] colored $Q^+$-graphs with the same head as $Q.$
Every graph $Q_1\in\mathcal C_1$ has $k$ noncoincident vertical edges and its pillar is a tree with $r+s$ vertices and $k$ edges where $s=s_1+...+s_m$. Consequently, we have $k=r+s-1$. 
 Therefore, it holds that
\begin{align}\label{C1sum}
N^{-1} n^{-k}\sum_{Q\in \mathcal C_1}(t)_{H(Q)}	
&= N^{-1} \sum_{H(Q)\in H(\mathcal C_1)} n^{-r+1} (t)_{H(Q)} \prod_{l=1}^m (t_l-t_{l-1})^{s_l}+o(1)
\end{align} 
where $H(\mathcal C_1)$ denotes the set of colored heads for graphs in $\mathcal C_1$.
\begin{percom}
From \[\prod_{l=1}^m\frac{[n(t_l-t_{l-1})]^{s_l}}{n^{s_l}}\to\prod_{l=1}^m (t_l-t_{l-1})^{s_l}\] we have 
\[\prod_{l=1}^m[n(t_l-t_{l-1})]^{s_l}-\prod_{l=1}^m(n(t_l-t_{l-1}))^{s_l}=o(n^s).\]
See also master thesis.
\end{percom}
We first derive the limit for this term if the summation runs for a class of similar heads.

\begin{figure}[b]
 \begin{tikzpicture}[>=latex,scale=0.65]
 \def\horizontaledge{[->] parabola bend +(1,0.2) +(2,0)}
 
  \draw (0,0)node[left]{\bf i}--+(10,0) ;
  
 \draw [green!70!black](1,0) \horizontaledge;
 \draw [green!70!black](7,0) \horizontaledge;
 \draw [red](3,0) \horizontaledge;
 \draw[green!70!black,->] (5,0) parabola bend +(-1,0.5) ++(-2,0);
 \draw[green!70!black,->] (3,0) parabola bend +(-1,0.5) ++(-2,0);
 \draw[red,->] (9,0) parabola bend +(-1,0.5) ++(-2,0);

   \draw (1,0) node [below] {\tiny $i_1=i_{10}$};
   \draw (5,0) node [below] {\tiny $i_7$};
   \draw (7,0) node [below] {\tiny $i_3=i_5$};
   \draw (9,0) node [below] {\tiny $i_4$};
 

  \draw (12,0)node[left]{\bf i}--+(10,0) ;
  
 \draw [green!70!black](13,0) \horizontaledge;
 \draw [green!70!black](17,0) \horizontaledge;
 \draw [red](15,0) \horizontaledge;
 \draw[green!70!black,->] (19,0) parabola bend +(-3,0.75) ++(-6,0);
 \draw[green!70!black,->] (21,0) ..controls +(120:1) and +(60:1).. +(0,0);
 \draw[red,->] (21,0) ..controls +(30:2) and +(150:2).. +(0,0);

   \draw (13,0) node [below] {\tiny $i_1=i_{10}$};
   \draw (15,0) node [below] {\tiny $i_2=i_{6}$};
   
   \draw (17,0) node [below] {\tiny $i_7$};
   \draw (19,0) node [below] {\tiny $i_8=i_9$};
   \draw (21,0) node [below] {\tiny $i_3=i_4=i_5$};

\end{tikzpicture}
\caption{Two similar heads.}
\label{simheads}
\end{figure}
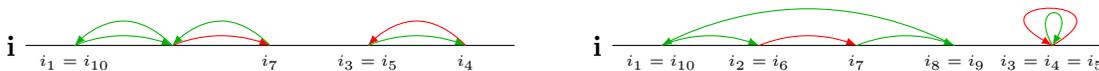

\begin{defn}\label{similar}
A $Q^+$-graph $Q$ induces a partition of the set $\{1,...,3k\}$, where $a$ and $b$ are in the same partition set if and only if $i_a$ and $i_b$ are connected in $H(Q)$. 
Let $Q$ and $Q'$ be colored $Q^+$-graphs with the same coloring vector. The heads $H(Q)$ and $ H(Q')$ are {\it similar} (sometimes we also say $Q$ and $Q'$ are similar) if they induce the same partition. The equivalence class of heads similar to $H(Q)$ will be denoted by $[[H(Q)]]$. See Figure \ref{simheads} for an example.
\end{defn}
At this point it is convenient to introduce the notion of component coloring multi-indices (CCMIs).
For a head of a colored $Q^+$-graph we denote the connected components by $H_1,...,H_r$ and their sizes (i.e. the number of edges they contain) by $2\nu_1,...,2\nu_r$.
For some component $H_a$ of the head, the CCMI $\bfl^{(a)}=(l^{(a)}_1,...,l^{(a)}_{\nu_a})\in\{1,...,m\}^{\nu_a}$ is defined in the following way.
We obtain a natural order for the edges of the $Q^+$-graph by the order of indices in (\ref{tydef}), i.e. the first edge connects $i_1$ and $i_2$, the second $i_2$ and $j_1$ and so on.
We set $l^{(a)}_b=l$ where $l$ is the color of the $b$-th up edge that connects to $H_a.$
\begin{remark}
Note that for a given $Q^+$-graph $Q$ the multi-index $(\bfl^{(1)},...,\bfl^{(r)})$ is not uniquely determined since it depends on the labeling of the head components $H_1,...,H_r$. We follow the convention that $H_1$ contains the index $i_1$. The labeling of the components $H_2,...,H_r$, however, remains arbitrary, for reasons that will be explained later.
\end{remark}
Let us now analyze the summation of $(t)_{H(Q)}$ for a similarity class $[[H(Q)]]$.

\begin{lem}\label{inclexclprinc}
Introducing the notation \[(TT^*)_{ \bfl}=\prod_{i=1}^{q} T_{l_i}T^*_{l_i}\]
for $\bfl=(l_1,...,l_q)\in \{1,...,m\}^q$, we have for a colored $Q^+$ graphs $Q_1\in\mathcal C_1$ with CCMIs $\bfl^{(1)},...,\bfl^{(r)}$ 
\[ \hspace{-1.5ex}\sum_{H(Q)\in [[H(Q_1)]]}\hspace{-1.5ex}(t)_{H(Q)}= \prod_{a=1}^r (\tr(TT^*)_{\bfl^{(a)}}+O(1)).\]
\end{lem}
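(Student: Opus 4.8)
The plan is to reduce the sum over a similarity class to a product of traces, one factor for each connected component of the head, by first putting a category-$\mathcal C_1$ head into a normal form as a disjoint union of directed ``$T_{l_1}T_{l_1}^*\cdots T_{l_\nu}T_{l_\nu}^*$'' cycles, and then running a short inclusion--exclusion over the $\bfi$-values.

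\emph{Normal form of a $\mathcal C_1$ head component.} For $Q_1\in\mathcal C_1$ every down edge coincides with exactly one up edge; let $\sigma$ be the resulting pairing of the $k$ blocks. Coincident vertical edges carry the same colour and lie on the same $\bfj$-line, so the coincidence of the down edge of block $s$ with the up edge of block $\sigma(s)$ forces both $i_{3s-1}=i_{3\sigma(s)}$ and $l_s=l_{\sigma(s)}$. Walking around the directed cycle $i_1\to i_2\to j_1\to i_3\to\cdots$ and contracting at these identifications, the $t$-edge of block $s$ is immediately followed by the $t^*$-edge of block $\sigma(s)$, of the same colour, then by the $t$-edge of the next block, and so on; the assumption that the pillar is a tree is exactly what guarantees that the walk obtained this way closes up into a single directed cycle inside each component $H_a$, visiting all $2\nu_a$ of its horizontal edges once. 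Reading the colours off this cycle in the natural order gives precisely the CCMI $\bfl^{(a)}$ (the $b$-th up edge attached to $H_a$ is the edge sitting at the $b$-th contraction of the cycle), so letting the $\bfi$-vertices of $H_a$ range freely over $\{1,\dots,N\}$ and unwinding the product of matrix entries along the directed cycle gives $\sum(t)_{H_a}=\tr(TT^*)_{\bfl^{(a)}}$ exactly. Turning the purely combinatorial conditions defining $\mathcal C_1$ into this cyclic normal form, and matching the colour sequence with the CCMI, is the step I expect to be the main obstacle; everything after it is routine bookkeeping.

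\emph{Summation over the similarity class.} By Definition \ref{similar}, the heads $H(Q)$ appearing in $[[H(Q_1)]]$ are heads of $\mathcal C_1$ graphs whose connectivity partition equals that of $H(Q_1)$, and for $\mathcal C_1$ graphs this partition pins down the pairing $\sigma$ and hence the whole identification pattern; thus all these heads have the same shape as $H(Q_1)$, the only freedom being the values assigned to the $\bfi$-vertices of the head, subject to the requirement that the value-sets of distinct components $H_a$ stay pairwise disjoint (otherwise the partition changes). Writing $(t)_{H(Q)}=\prod_{a=1}^r(t)_{H_a}$, summing over such assignments, and expanding the disjointness requirement by inclusion--exclusion, the leading term is the unconstrained product $\prod_{a=1}^r\bigl(\sum_{\text{free}}(t)_{H_a}\bigr)=\prod_{a=1}^r\tr(TT^*)_{\bfl^{(a)}}$ by the previous step. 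Every remaining term identifies an $\bfi$-vertex $x$ of some $H_a$ with one of some $H_b$ (or collapses vertices still further); after one such identification the matrices $M_a,M_b$ read off the two cycles have operator norm bounded uniformly in $N$ by assumption (i) of Theorem \ref{th1}, so the corresponding sum equals $\sum_x(M_a)_{xx}(M_b)_{xx}=O(N)$ rather than $O(N^2)$. Iterating, every term that identifies at least one pair of distinct components is smaller by a factor $N$ than $\prod_{a=1}^r\tr(TT^*)_{\bfl^{(a)}}=O(N^r)$; hence $\sum_{H(Q)\in[[H(Q_1)]]}(t)_{H(Q)}=\prod_{a=1}^r\tr(TT^*)_{\bfl^{(a)}}+O(N^{r-1})$, which is exactly the factored form $\prod_{a=1}^r\bigl(\tr(TT^*)_{\bfl^{(a)}}+O(1)\bigr)$ asserted in the lemma.
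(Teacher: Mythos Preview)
Your proposal is correct and follows essentially the same approach as the paper: identify each head component with a directed cycle whose free summation gives $\tr(TT^*)_{\bfl^{(a)}}$, then use inclusion--exclusion together with the operator norm bound (i) to control the cross-component disjointness constraint. The only organizational difference is that the paper performs the inclusion--exclusion one component at a time (showing $\sum_{\{i_1,\dots,i_{2\nu_a}\}\cap M=\emptyset}(t)_{H_a}=\tr(TT^*)_{\bfl^{(a)}}+O(1)$ for a fixed finite $M$, then inducting over the components), whereas you expand all the disjointness constraints at once and bound the correction globally as $O(N^{r-1})$; these are equivalent bookkeepings of the same argument.
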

\begin{proof}
We write
\begin{align}\label{cyclesummation}
\sum_{H(Q)\in [[H(Q_1)]]}\hspace{-1.5ex}(t)_{H(Q)}
&=\sideset{}{'}\sum (t)_{H_1}\cdots(t)_{H_{r}},\nonumber\\
\end{align}
where $(t)_{H_1},...,(t)_{H_r}$ are products of entries of $T_{1},...,T_m,T_1^*,...,T_m^*$ associated with the distinct cycles 
$H_1,...,H_r$ of the head in the same manner as colored $Q^+$-graphs are associated to products of the form (\ref{tydef}). The summation $\Sigma'$ runs for all indices of these entries over $\{1,...,N\}$, with the restriction that entries of different cycles have distinct indices.

Consider first $\sum (t)_{H_a}$ for some $a\in\{1,...,r\}$. By the definition of the CCMI $\bfl^{(a)}$ and recalling that the pillar of $Q$ is a tree it is not difficult to verify that
\[\sum (t)_{H_a}=\sum_{\bfi\in \{1,...,N\}^{2\nu_a}} t^{*(l^{(a)}_1)}_{i_1 i_2}t^{(l^{(a)}_2)}_{i_2 i_3}t^{*(l^{(a)}_2)}_{i_3 i_4}\dots t^{*(l^{(a)}_{\nu_a})}_{i_{2\nu_a-1} i_{2\nu_a}} t^{(l^{(a)}_1)}_{i_{2\nu_a} i_1}=\tr(TT^*)_{\bfl^{(a)}}.\]
Then, applying the inclusion-exclusion principle, and recalling $\|T_l\|_{\text{op}}\leq \tau_0$ for all $l$, yields 
\[\sum_{\bfi\in \{1,...,N\}^{2\nu_a}\atop \{i_1,...,i_{2\nu_a}\}\cap M=\emptyset} t^{*(l^{(a)}_1)}_{i_1 i_2}t^{(l^{(a)}_2)}_{i_2 i_3}\dots t^{(l^{(a)}_{1})}_{i_{2\nu_a} i_1}=\tr(TT^*)_{\bfl^{(a)}}+O(1),\]
for any finite set $M\subset \Z_+.$
The statement follows now by induction over the distinct cycles of the head.
\end{proof}
Applying this Lemma we find
\begin{align}\label{simlim}
 N^{-1} \sum_{H(Q)\in [[H(Q_1)]]} n^{-r+1} (t)_{H(Q)} \prod_{l=1}^m (t_l-t_{l-1})^{s_i}
&\quad\longrightarrow\quad  c^{r-1}\prod_{l=1}^m (t_l-t_{l-1})^{s_i} \prod_{a=1}^r M^{\nu_a}_{\bfl^{(a)}}.
\end{align}

Now, in order to derive the limit of $\E[m_k(F^{[X]^N_n})],$ it is sufficient to determine, for given $r, \nu_1,...,\nu_r,s_1,...,s_m$ and $\bfl^{(1)},...,\bfl^{(r)}$, the number of similarity classes with this specific parameters.

\begin{defn}
Two components $H_a$ and $H_b$ of the head of a colored $Q^+$-graph in $\mathcal C_1$ are {\it vertically connected} if there is a down edge starting at some vertex in $H_a$, which is followed by an up edge that ends at some vertex in $H_b.$ Note that if $H_a$ and $H_b$ are vertically connected, then there is exactly one down edge leaving $H_a$ that is followed by an up edge connecting to $H_b$ and exactly one down edge leaving $H_b$ that is followed by an up edge connecting to $H_a.$ These four edges form two pairs of coincident edges and are of the same color. Therefore, we may understand the vertical connections as colored as well.
\end{defn}

\begin{defn}
For a colored $Q^+$-graph $Q\in\mathcal C_1$ with components $H_1,...,H_r$ we define the connectivity tree $G_Q$ to be the graph with vertex set $\{H_1,...,H_r\}$ where $(H_a,H_b)$ is an edge in $G_Q$ if and only if $H_a$ and $H_b$ are vertically connected in $Q$.
\end{defn}
Note that due to the arbitrary labeling of $H_2,...,H_r$ most $Q^+$-graphs have more than one possible connectivity tree.

\begin{lem}\label{count}
It holds that 
\begin{align*}
&\E[m_k(F^{\wXpow})]\longrightarrow m_k
\end{align*}
where
\begin{align*}
m_k=\sum_{r=1}^{k}c^{r-1}\sum_{\nu_1+...+\nu_r=k}\ \sum_{\bfl'\in\{1,...,m\}^{k}}\ 
 c_{r,\nu,\bfl'}\prod_{a=1}^r M^{\nu_a}_{\bfl^{(a)}}\prod_{l=1}^m (t_l-t_{l-1})^{s_{l,\nu,\bfl'}}.
\end{align*}
Here, $\bfl'=(\bfl^{(1)},...,\bfl^{(r)})$ where $\bfl^{(a)}$ has length $\nu_a$. For the definition of $s_{l,\nu,\bfl'}$ see Theorem \ref{th1}, for the definition of $ c_{r,\nu,\bfl'}$ see section \ref{cdef}.
\end{lem}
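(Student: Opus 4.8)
The strategy is to combine the three ingredients already assembled in the excerpt: equation \eqref{catsum} splitting $\E[m_k(F^{\wXpow})]$ into sums over $\mathcal C_1$, $\mathcal C_2$, $\mathcal C_3$; the vanishing of the $\mathcal C_2$ sum (single vertical edge) and of the $\mathcal C_3$ sum (Lemma \ref{C_3sum}); and the analysis of $\mathcal C_1$ via the similarity-class decomposition \eqref{C1sum}, Lemma \ref{inclexclprinc} and the limit \eqref{simlim}. After these reductions it remains only to \emph{count} similarity classes with prescribed parameters and to recognize the resulting sum as the formula for $m_k$. The whole point of Lemma \ref{count} is thus a combinatorial bookkeeping argument, organized around connectivity trees.

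\textbf{Key steps.} First I would fix $k$ and group the $\mathcal C_1$ graphs by their similarity class $[[H(Q)]]$; by \eqref{simlim} each class contributes, in the limit, the quantity $c^{r-1}\prod_{a=1}^r M^{\nu_a}_{\bfl^{(a)}}\prod_{l=1}^m (t_l-t_{l-1})^{s_l}$, which depends on the class only through $r$, the component sizes $\nu_1,\dots,\nu_r$, the CCMIs $\bfl^{(1)},\dots,\bfl^{(r)}$ (hence through $\bfl'$), and the numbers $s_1,\dots,s_m$. Second, I would check that, once $\bfl'$ is fixed, the exponents $s_l$ are forced to equal the quantities $s_{l,\nu,\bfl'}=\sum_a n_l^{(a)}$ defined in Theorem \ref{th1}: indeed $s_l$ is the number of noncoincident $\bfj^{(l)}$-vertices, each such vertex corresponds bijectively (since $Q\in\mathcal C_1$ and the pillar is a tree) to one up-edge of color $l$ entering a component with multiplicity~$2$ over the pair, except that the \emph{first} up-edge into each component $H_a$ with $a\ge 2$ is the one realizing the vertical connection toward $H_1$ and has already been counted on the $H_1$-side of that connection --- this is exactly the asymmetry in the definition of $n_l^{(a)}$. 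Third, for fixed $r,\nu,\bfl'$ I would count the number of similarity classes of $\mathcal C_1$-graphs with these data. Here the connectivity tree $G$ on the components $H_1,\dots,H_r$ enters: given $G$, the number of ways to attach the vertical connections consistent with the prescribed colors is $\prod_{l=1}^m\prod_{a=1}^r \frac{n_l^{(a)}!}{(n_l^{(a)}-n_l^{(a),G})!}1_{\{n_l^{(a),G}\le n_l^{(a)}\}}$, where $n_l^{(a),G}$ counts the color-$l$ neighbours of $H_a$ in $G$ away from the root direction (a neighbour uses up one of the $n_l^{(a)}$ available color-$l$ slots of $H_a$, in an ordered fashion because the up-edges are ordered by \eqref{tydef}). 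Fourth, since the labeling of $H_2,\dots,H_r$ is arbitrary, one over-counts: two pairs $(G,\bfl')$ and $(G_\pi,\bfl'\circ\pi)$ give the same similarity class, so I divide by the size of the orbit, which is exactly $|S_{\bfl',G}|$ as defined in section \ref{cdef} (the permutations $\pi$ that genuinely change the labeled structure). Summing over all trees $G$ yields $c_{r,\nu,\bfl'}$, and summing over $r$, $\nu$ and $\bfl'$ gives $m_k$; combining with Lemma \ref{C_3sum} and the vanishing of the $\mathcal C_2$ sum completes the proof.

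\textbf{Main obstacle.} The delicate part is making the counting in the third and fourth steps precise and checking it is neither an over- nor an under-count. Concretely: (a) verifying that every similarity class of $\mathcal C_1$-graph with data $(r,\nu,\bfl')$ arises from \emph{some} connectivity tree and that the root-oriented adjacency numbers $n_l^{(a),G}$ never exceed $n_l^{(a)}$ for an admissible configuration; (b) confirming that, for a fixed choice of labeled tree $G$ and fixed CCMIs, the number of genuinely distinct ways to realize the color-respecting vertical connections is precisely the stated falling-factorial product --- this requires care because the down-edge \emph{out} of $H_a$ toward a neighbour is determined once the up-edge into that neighbour is fixed, so the slots being filled are exactly the color-$l$ up-edges of $H_a$ other than the distinguished first one when $a\ge 2$; and (c) identifying the stabilizer correctly so that $\sum_G 1/|S_{\bfl',G}|$ times the product counts each unlabeled similarity class exactly once (a Burnside/orbit-counting bookkeeping, where the subtlety is that $G$ itself may have symmetries compatible with $\bfl'$). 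Once these three points are nailed down, assembling the formula is routine.
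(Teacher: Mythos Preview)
Your proposal is correct and follows essentially the same route as the paper's own proof: reduce via \eqref{catsum}, Lemma \ref{C_3sum}, \eqref{C1sum} and \eqref{simlim} to the two claims (i) $s_l=\sum_a n_l^{(a)}$ and (ii) the number of similarity classes with data $(r,\nu,\bfl')$ equals $c_{r,\nu,\bfl'}$, then establish (i) by relating $\bfj^{(l)}$-vertices to up-edges minus vertical connections and (ii) by the falling-factorial count over connectivity trees corrected by $|S_{\bfl',G}|$. The paper's argument for (i) is phrased slightly differently---it observes that each $\bfj^{(l)}$-vertex has degree $2$ or $4$, with degree $4$ exactly on vertical connections, so $s_l$ is (up-edges of color $l$) minus (vertical connections of color $l$), and then identifies the latter with $\#\{b>1:l^{(b)}_1=l\}$---but this is the same mechanism you describe; for (ii) the paper argues via \emph{positions} of down-edges rather than up-edge slots, which is equivalent because in $\mathcal C_1$ each down-edge coincides with an up-edge at the same $\bfi$-vertex and of the same color.
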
 

\begin{proof}
Recalling (\ref{catsum}) and Lemma \ref{C_3sum} it is sufficient to derive that
\[N^{-1}n^{-k}\sum_{Q\in \mathcal C_1}(ty)_Q\longrightarrow m_k.\]
Thus, by virtue of (\ref{C1sum}) and (\ref{simlim}) there are two things left to show:
\begin{description}
\item{ (1)} For a $Q^+$-graph $Q$ with CCMIs $\bfl^{(1)},...,\bfl^{(r)},$ the number of noncoincident vertices on the $\bfj^{(l)}$-line is $s_l=\sum_{a=1}^r n_l^{(a)}.$
\item{(2)} There are $c_{r,\nu,\bfl'}$ similarity classes of $Q^+$-graphs with CCMIs $\bfl^{(1)},...,\bfl^{(r)}$.
\end{description}
For (1) note that every vertex on the $\bfj^{(l)}$-line has either degree 2 or 4 and its degree is 4 if and only if it lies on a vertical connection of color $l$. Therefore, $s_l$ is the number of up edges colored in $l$ minus the number of vertical connections of color $l.$\\
The number of up edges colored in $l$ is the number of $l$-s in the CCMIs $\bfl^{(1)},...,\bfl^{(m)}$. Let $H_a$ and $H_b$ be two vertically connected components where in the connectivity tree $G_Q$ $H_a$ lies on the path from $H_b$ to $H_1$. Then, the color of the vertical connection $(H_a,H_b)$ is $l^{(b)}_1$. Therefore, the entries $l^{(b)}_1$ for $b>1$ correspond one to one to the colors of the vertical connections of $Q$.
This proves claim (1).

For $(2)$ we first show that there are
\[\prod_{l=1}^m \prod_{a=1}^{r}\frac{n_l^{(a)}!}{(n_l^{(a)}-n_l^{(a),G})!} 1_{\{n_l^{(a),G}\leq n_l^{(a)}\}}\]
similarity classes of $Q^+$-graphs with connectivity tree $G$ and CCMIs $\bfl^{(1)},...,\bfl^{(r)}$.
Within a component $H_a$ a vertical connection $(H_a,H_b)$ is at a certain position $p\in\{1,...,\nu_a\}$, meaning that the $p$-th down edge leaving $H_a$ is followed by an up edge connecting to $H_b$. It is straightforward to verify that two $Q^+$-graphs with the same connectivity tree $G$ and the same CCMIs $\bfl^{(1)},...,\bfl^{(r)}$ are similar if and only if within all components all vertical connections are at the same positions. 
 
Consider component $H_1$, and let $H_{a_1},...,H_{a_p}$ be the components adjacent to it in $G$. A $Q^+$-graph $Q$ with connectivity tree $G$ contains the corresponding vertical connections $(H_1,H_{a_1}),...,(H_1,H_{a_p})$, $n_l^{(1),G}$ of which are colored in $l$. Since $H_1$ has $n_l^{(1)}$ leaving down edges of color $l$ we have $\prod_{l=1}^m\frac{n_l^{(1)}!}{(n_l^{(1)}-n_l^{(1),G})!} 1_{\{n_l^{(1),G}\leq n_l^{(1)}\}}$ possibilities of positioning the vertical connections among the vertical edges leaving $H_1$. Now turn to some component $H_a\neq H_1$. There is one component $H_{a_0}$ vertically connected to $H_a$ that lies on the path from $H_a$ to $H_1$ in $G$.
By construction, the vertical connection $(H_a,H_{a_0})$ is at position $\nu_a$ within $H_a$ and it is colored in $l^{(a)}_1.$ 
For distributing all other vertical connections at $H_a$ on their possible positions within $H_a$, we are left with $\prod_{l=1}^m\frac{n_l^{(a)}!}{(n_l^{(a)}-n_l^{(a),G})!} 1_{\{n_l^{(a),G}\leq n_l^{(a)}\}}$ possibilities. 
This leaves us, overall, with
\[\prod_{l=1}^m \prod_{a=1}^{r}\frac{n_l^{(a)}!}{(n_l^{(a)}-n_l^{(a),G})!} 1_{\{n_l^{(a),G}\leq n_l^{(a)}\}}\]
possibilities for distributing all vertical connections of all components on their possible positions. 

Most similarity classes have more than one possible connectivity tree and CCMIs since the components $H_2,...,H_r$ are arbitrarily labeled. By definition of the set $S_{\bfl',G}$, introduced in section \ref{cdef}, a $Q^+$-graph $Q$ has $|S_{\bfl',G}|$ possible connectivity trees and CCMIs where $G$ is one possible connectivity tree for $Q$. This proves (2).
\end{proof}

\begin{remark}
The arbitrary labeling of the components $H_2,...,H_r$ is necessary in order to apply the combinatorical arguments of the proof above. If we, for example, label the components in order of appearance with respect to the natural order of edges, we impose subtle restrictions on the CCMIs, leading to more involved expressions.
\end{remark}

In the next subsection we complete the proof of Theorem \ref{th1}.


\subsection{Convergence of $m_k(F^{\XN})$}
\label{m_kcon}

The following Lemma ensures the a.s. convergence of $m_k(F^{\wXpow})$. The proof relies on corresponding results for constant $f$. For more details we refer to \cite[Theorem 4.1]{BS}.
\begin{lem}\label{summable} It holds that
\begin{align*}
\E[m_k(F^{\wXpow})-\E m_k(F^{\wXpow})]^4 = O(N^{-2}). 
\end{align*}
\end{lem}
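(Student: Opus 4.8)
\textbf{Proof plan for Lemma \ref{summable}.}

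The plan is to expand the fourth centered moment as a sum over quadruples of colored $Q^+$-graphs and to show that the combinatorial structure forces at least an $N^{-2}$ gain. Writing $m_k(F^{\wXpow}) = N^{-1}n^{-k}\sum_Q (ty)_Q$ as in \eqref{catsum}, we have
\[
\E\left[\left(m_k(F^{\wXpow}) - \E m_k(F^{\wXpow})\right)^4\right] = N^{-4}n^{-4k}\sum_{Q^{(1)},Q^{(2)},Q^{(3)},Q^{(4)}} \E\left[\prod_{u=1}^4 \left((ty)_{Q^{(u)}} - \E (ty)_{Q^{(u)}}\right)\right],
\]
where each $Q^{(u)}$ ranges over colored $Q^+$-graphs of order $k$ (on the same $\bfi$- and $\bfj^{(l)}$-lines), and the subtraction of the mean corresponds, as in the single-sum analysis, to discarding graphs with an isolated vertical edge. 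The centering means that a quadruple contributes only if the union graph $Q^{(1)}\cup Q^{(2)}\cup Q^{(3)}\cup Q^{(4)}$ has no connected component consisting of a single $Q^{(u)}$; equivalently, after identifying vertices and edges shared between the four graphs, the resulting multigraph must be such that every $Q^{(u)}$ shares at least one $\bfj$-vertex (hence a Gaussian factor $y$, by matching) with another $Q^{(v)}$.

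First I would set up the power-counting bookkeeping exactly as in the proof of Lemma \ref{C_3sum}: for a given ``glued'' configuration, let $R$ be the total number of connected components of the combined head, $S$ the total number of noncoincident $\bfj$-vertices, and $P$ the number of noncoincident vertical edges. The same argument via Theorem A.35 of \cite{BS} (and non-negativity of spectral moments / boundedness $\|T_l\|_{\mathrm{op}}\le\tau_0$) gives that the contribution of all configurations glued in a fixed pattern is $O(N^{R+S})$ up to constants depending on $k$, while the prefactor is $N^{-4}n^{-4k}$ and each single graph in isolation would have $R+S = 4(r+s)$ with $r+s-1 = k$, i.e.\ $R+S$ up to $4k+4$, matching the prefactor to give $O(1)$. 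The point is then to show that the centering and connectivity constraints strictly reduce the available power: when the four graphs are forced to share vertical edges in a connected fashion, each shared (identified) $\bfj$-vertex or vertical edge reduces $S$ or $P$ without a compensating increase, and one shows $R+S \le 4k+2$ for every admissible glued configuration, yielding the $O(N^{-2})$ bound after multiplying by the prefactor and summing over the (finitely many, $k$-dependent) gluing patterns.

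The main obstacle is the careful case analysis establishing the sharp loss of two powers of $N$: one must verify that \emph{any} way of connecting the four graphs through shared Gaussian factors costs at least two units in $R+S$ relative to four disjoint $\mathcal C_1$-type graphs, and that configurations where some $Q^{(u)}$ lies in $\mathcal C_3$ (so that, by Lemma \ref{c3prop}, it already loses a power on its own) or where multiplicities exceed two do not accidentally recover the power. I expect the cleanest route is to reduce, exactly as in the constant-$f$ case treated in \cite[Theorem 4.1]{BS}, to the observation that the colors play no role in this power count — the coloring only restricts \emph{which} gluings are allowed, never increases $R+S$ — so the estimate follows verbatim from the uncolored computation of Bai and Silverstein once the bookkeeping of the extra $\bfj^{(l)}$-lines (each contributing the harmless factor $\prod_l [n(t_l-t_{l-1})]! / ([n(t_l-t_{l-1})]-s_l)! = O(n^{s})$ as in \eqref{C1sum}) is carried through. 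Finally, combining Lemma \ref{summable} with Chebyshev's inequality gives $\P(|m_k(F^{\wXpow}) - \E m_k(F^{\wXpow})| > \eps) = O(N^{-2})$, which is summable, so Borel--Cantelli yields $m_k(F^{\wXpow}) \to m_k$ almost surely; together with \eqref{error}, Lemma \ref{count}, and the verification of Carleman's condition (immediate from $\|T_l\|_{\mathrm{op}}\le\tau_0$, which bounds $m_k$ by $C^k$ for a constant $C$), Theorem \ref{MCT} completes the proof of Theorem \ref{th1}.
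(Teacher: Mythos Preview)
Your proposal is correct and follows essentially the same route as the paper: expand the fourth centered moment as a sum over quadruples of colored $Q^+$-graphs, use independence to discard quadruples in which some $Q^{(u)}$ shares no vertical edge with the others (so the union has at most two connected components), and then apply Theorem~A.35 of \cite{BS} exactly as in Lemma~\ref{C_3sum}, reducing to the uncolored computation of \cite[Theorem 4.1]{BS}. The paper's proof is terser --- it expands the centered product into its $16$ summands and bounds each via Theorem~A.35 --- whereas you articulate the power-counting target $R+S\le 4k+2$ and the observation that coloring only restricts admissible gluings; these are the same argument in slightly different packaging.
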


\begin{proof}
For $a=1,...,4$, given multi-indices $\bfl_a=(l_1^{(a)},...,l_k^{(a)})\in\{1,...,m\}^k,$ $\bfi_a\in\{1,...,N\}^{3k}$ and $\bfj_a=(j^{(a)}_1,...,j^{(a)}_k)$ with $j^{(a)}_p\in \{1,...,[n(t_{l^{(a)}_p}-t_{l^{(a)}_p-1})]\}$, we denote by $Q_a$ the corresponding colored $Q^+$-graph $Q_{\bfl_a,\bfi_a,\bfj_a}$. Then, we have

\begin{align}\label{power4sum}
\E[m_k(F^{\wXpow})-\E m_k(F^{\wXpow})]^4
&=\E\left[\frac 1 {N n^k} \sum_{\bfl, \bfi, \bfj} (ty)_{Q_{\bfl,\bfi,\bfj}}-\E\left[\frac 1 {N n^k}\sum_{\bfl,\bfi,\bfj} (ty)_{Q_{\bfl,\bfi,\bfj}}\right]\right]^4\nonumber\\
&=N^{-4}n^{-4k}\sum_{\bfl_1,...,\bfj_4} \E\left[ \prod_{a=1}^4 \left((ty)_{Q_a}-\E [(ty)_{Q_a}]\right)\right].
\end{align}
If, for some $a$, all vertical edges of $Q_a$ do not coincide with vertical edges of one of the other graphs, we obtain
\[\E\left[ \prod_{a=1}^4 \left((ty)_{Q_a}-\E [(ty)_{Q_a}]\right)\right]=0,\]
from independence. Thus, $Q=\cup Q_a$ consists of either one or two connected components. By expanding (\ref{power4sum}) we have
\begin{align}\label{power4sum2}
&\hspace {-5ex}\E[m_k(F^{\wXpow})-\E m_k(F^{\wXpow})]^4\nonumber\\
&=N^{-4} n^{-4k}\sum_{\bfi_1,...,\bfl_4} \left( \E\left[\prod_{a=1}^4 (ty)_{Q_a}\right]\pm...+\prod_{a=1}^4\E [(ty)_{Q_a}]\right)\nonumber
\end{align}
Applying Theorem A.35. of \cite{BS}, in a similar way as in the proof of Lemma \ref{C_3sum}, for each of the 16 summands within the  brackets separately, shows that this sum is $O(N^{-2}).$ 
\end{proof}

Now, combining Lemma \ref{count} and Lemma \ref{summable} we have, by virtue of the Borel-Cantelli Lemma and (\ref{error}),
\[m_k(F^{[X]_n^N})\overset{\text{\tiny a.s.}}{\longrightarrow} m_k,\]
for all $k$, where $m_k$ is defined as in Theorem \ref{th1}. Therefore, if the sequence $(m_k)$ satisfies Carleman's condition, applying Theorem \ref{MCT} completes the proof of Theorem \ref{th1}. 

\begin{lem}\label{Carleman}
The sequence of limiting spectral moments $m_k$ satisfies
\[\sum_{k=0}^\infty (m_{2k})^{-\frac{1}{2k}}=\infty.\]
\end{lem}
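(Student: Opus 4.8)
The plan is to bound $m_{2k}$ by a quantity that grows at most like $C^k (2k)!$ or, better, like $C^k k^k$, so that $(m_{2k})^{-1/(2k)}$ decays no faster than $c/k$ and the series diverges by comparison with the harmonic series. Since $m_{2k}$ is a finite sum of nonnegative terms (each factor $M^{\nu_a}_{\bfl^{(a)}} = \lim \frac1N \tr\bigl(\prod T_{l_i}T_{l_i}^*\bigr) \geq 0$ is a limit of traces of positive semidefinite matrices, $t_l - t_{l-1} \in (0,1)$, and $c_{r,\nu,\bfl'} \geq 0$), it suffices to combine a bound on each summand with a bound on the number of summands.

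First I would bound a single summand. Using (i) of Theorem \ref{th1}, $\|T_l\|_{\text{op}} \leq \tau_0$, so $\frac1N \tr\bigl(\prod_{i=1}^{\nu_a} T_{l^{(a)}_i}(T_{l^{(a)}_i})^*\bigr) \leq \tau_0^{2\nu_a}$, hence $M^{\nu_a}_{\bfl^{(a)}} \leq \tau_0^{2\nu_a}$ and $\prod_{a=1}^r M^{\nu_a}_{\bfl^{(a)}} \leq \tau_0^{2k}$ (as $\sum_a \nu_a = k$, here with $k$ replaced by $2k$). The factors $\prod_l (t_l - t_{l-1})^{s_{l,\nu,\bfl'}} \leq 1$ are harmless. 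So the whole content is in bounding $\sum c_{r,\nu,\bfl'}$, i.e. the \emph{total number of similarity classes of $\mathcal C_1$-graphs}, equivalently the number of colored $Q^+$-heads in $\mathcal C_1$ up to similarity, for given $k$. A clean way to see this is combinatorial rather than through the explicit formula for $c_{r,\nu,\bfl'}$: a similarity class of a $\mathcal C_1$-graph is determined by a partition of $\{1,\dots,2k\}$ (the pairing/partition structure of the $\bfi$-indices induced by the head, which by Lemma \ref{c3prop}-type reasoning is a tree pairing so there are at most the Catalan-type count $\leq 4^{k}$ of them for $2k$ edges, i.e. $k$ vertical edges for the $m_{2k}$ case becomes $2k$), together with the coloring vector $\bfl' \in \{1,\dots,m\}^{2k}$ (at most $m^{2k}$ choices) and the choice of connectivity-tree structure / positions of vertical connections (bounded crudely by $(2k)^{2k}$ or by $k!$-type factors). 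Multiplying, $\sum_{r,\nu,\bfl'} c_{r,\nu,\bfl'} \leq (Cm)^{k}\, k^{k}$ for a universal constant $C$, where I use that all combinatorial choices — tree shapes on $\leq 2k$ vertices, assignments of positions — are bounded by $k^{O(k)}$.

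Putting it together, $m_{2k} \leq (C m \tau_0^2)^{k} k^{k}$, so $(m_{2k})^{-1/(2k)} \geq (C m \tau_0^2)^{-1/2} k^{-1/2}$, and $\sum_k k^{-1/2} = \infty$; a fortiori $\sum_k (m_{2k})^{-1/(2k)} = \infty$, which is Carleman's condition. If one wants the cleaner bound $m_{2k} \leq C^k (2k)!$ (still enough, since $((2k)!)^{-1/(2k)} \sim e/(2k)$ by Stirling and $\sum 1/k$ diverges), one can simply note the number of summands underlying $m_{2k}$ is at most the number of ways the method-of-moments expansion produces terms, which is bounded by the number of $Q^+$-graphs on $2k$ blocks of four edges modulo isomorphism, itself $\leq (2k)!\cdot m^{2k}\cdot 2^{2k}$; either route works.

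The main obstacle will be making the counting bound on $\sum_{r,\nu,\bfl'} c_{r,\nu,\bfl'}$ rigorous without getting bogged down in the intricate definition of $c_{r,\nu,\bfl'}$ from section \ref{cdef}: the safest path is to bypass the formula entirely and argue directly that $\sum_{r,\nu,\bfl'} c_{r,\nu,\bfl'} = N^{-1}\cdot(\text{number of similarity classes of }\mathcal C_1\text{-graphs for }2k)$-type combinatorial quantity is at most the number of \emph{all} $\mathcal C_1$-graph isomorphism classes, and bound that by elementary factorial estimates (number of pair-partitions of a $2k$-set with a tree structure, times colorings, times labelings). The rest — pulling $\tau_0^{2k}$ out of the spectral moments and observing $\prod (t_l-t_{l-1})^{s} \leq 1$ — is routine. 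One should also note in passing that the $m_{2k}$ are genuinely finite (already guaranteed by Lemma \ref{count} together with the bound just derived), so Theorem \ref{MCT}(a) holds as well.
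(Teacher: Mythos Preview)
Your approach works but takes a different route from the paper. The paper sidesteps the explicit moment formula entirely: writing $\wX=\sum_{l=1}^m S_l$ with $S_l=\frac1n T_lY_lY_l^*T_l^*$, it applies convexity of $A\mapsto\tr(A^k)$ on positive semidefinite matrices to obtain $\frac1N\tr\bigl(\sum_l S_l\bigr)^k\leq \frac{m^{k-1}}{N}\sum_l\tr(S_l^k)$, and then bounds $\frac1N\tr(S_l^k)\leq\|S_l\|_{\text{op}}^k$ using that the largest eigenvalue of $\frac1{n(t_l-t_{l-1})}Y_lY_l^*$ converges to the Mar\v{c}enko-Pastur edge $(1+\sqrt{y_l})^2$. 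This yields the sharp geometric bound $m_k\leq C^k$ with no combinatorics. Your route---bound each factor in the formula for $m_{2k}$ and count similarity classes---is self-contained (it uses only what the moment computation has already produced) but lands on the cruder $m_{2k}\leq C^k k^{O(k)}$; still enough for Carleman, and in fact the count can be tightened to $(4m)^{2k}$ by observing that for each fixed $\bfl$ the admissible $\mathcal C_1$-structures are a subset of the uncolored ones, whose number is the Catalan number $\leq 4^{2k}$.

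One correction: your claim that each $M^{\nu_a}_{\bfl^{(a)}}\geq 0$ because it is ``a limit of traces of positive semidefinite matrices'' is wrong---a product $\prod_i T_{l_i}T_{l_i}^*$ of PSD matrices is in general not PSD, and its trace can be negative already for four factors. The repair is immediate: use $|M^{\nu_a}_{\bfl^{(a)}}|\leq\tau_0^{2\nu_a}$, note that $c_{r,\nu,\bfl'}\geq 0$ from its definition, include the omitted factor $c^{r-1}\leq\max(1,c)^{2k}$, and bound $m_{2k}=|m_{2k}|$ via the triangle inequality (positivity of $m_{2k}$ itself being clear since it is a limit of moments of probability measures supported on $[0,\infty)$).
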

\begin{proof}
Consider the matrices $S_l=\frac 1 {n} T_lY_lY_l^*T_l^*$ for $l=1,...,m.$
The spectral distribution of $\frac 1{n(t_l-t_{l-1})} Y_lY_l^*$ is known to converge to the Mar\v{c}enko-Pastur law $p_{y_l}(x)$ with support $[(1-\sqrt{y_l})^2,(1+\sqrt{y_l})^2]$ for $l=1,...,m$, where $y_l=c(t_l-t_{l-1})^{-1}$. Thus, we have
\[\lim_{N\to\infty}\frac 1 N \tr(S_l^k)\leq \lim_{N\to\infty}(t_l-t_{l-1})^k \|T_l\|_{\text{op}}^{2k}\bigg\|\frac 1 {n(t_l-t_{l-1})} Y_l^*Y_l\bigg\|_{\text{op}}^k\leq\tau_0^{2k}\left(1+\sqrt {\sideset{}{_l}{\max}(y_l)}\right)^{2k},\]
for $l=1,...,m.$
Therefore, the result follows from
\begin{align*}
m_k(F^{[X]^N_n})\leq \frac 1 N m^{k-1}\left(\tr(S_1^k)+\dots+\tr(S_m^k)\right),
\end{align*}
which holds due to the convexity of the function $A\mapsto \tr(A)^k$, see for example \cite[Theorem 2.10]{C}.
\end{proof}

\textit{Acknowledgment.} We would like to thank Carlos Vargas Obieta and Steen Thorbj{\o}rnsen for a fruitful discussion.

\end{document}